\newcommand{\Z}{\mathbb Z}
\newcommand{\mbC}{\mathbb C}
\newcommand{\mcM}{\mathcal M}
\newcommand{\oM}{\overline{\mathcal M}}
\newcommand{\<}{\left <}
\renewcommand{\>}{\right >}
\def\d{\partial}
\def\CP1{\mathbb{C}\mathrm{P}^1}
\newcommand{\cL}{\mathcal L}
\newcommand{\mcP}{\mathcal P}
\newcommand{\mcA}{\mathcal A}
\newcommand{\mcE}{\mathcal E}
\newcommand{\tR}{\widetilde R}
\newcommand{\tQ}{\widetilde Q}
\newtheorem{theorem}{Theorem}[section]
\newtheorem{proposition}[theorem]{Proposition}
\newtheorem{lemma}[theorem]{Lemma}
\theoremstyle{definition}
\title[Equivalence of the open KdV and the open Virasoro]{Equivalence of the open KdV and the open Virasoro equations for the moduli space of Riemann surfaces with boundary}
\author{Alexandr Buryak}
\address{Alexandr~Buryak:\newline
Department of Mathematics,
ETH Zurich,\newline
Ramistrasse 101 8092, HG G27.1, Zurich, Switzerland.} 
\email{buryaksh@gmail.com}
\subjclass[2010]{Primary 35Q53; Secondary 14H10}
\keywords{Riemann surfaces with boundary, moduli space, KdV equations}
\numberwithin{equation}{section}
\begin{document}

\begin{abstract}

In a recent paper R. Pandharipande, J. Solomon and R. Tessler initiated a study of the intersection theory on the moduli space of Riemann surfaces with boundary. The authors conjectured KdV and Virasoro type equations that completely determine all intersection numbers. In this paper we study these equations in detail. In particular, we prove that the KdV and the Virasoro type equations for the intersection numbers on the moduli space of Riemann surfaces with boundary are equivalent.   

\end{abstract}

\maketitle

\section{Introduction}

Denote by~$\mcM_{g,n}$ the moduli space of smooth complex algebraic curves of genus~$g$ with~$n$ distinct marked points. In~\cite{DM69} P.~Deligne and D.~Mumford defined a natural compactification $\mcM_{g,n}\subset\oM_{g,n}$ via stable curves (with possible nodal singularities). The moduli space $\oM_{g,n}$ is a nonsingular complex orbifold of dimension $3g-3+n$.

A new direction in the study of the moduli space $\oM_{g,n}$ was opened by E. Witten~\cite{Wit91}. The class $\psi_i\in H^2(\oM_{g,n};\mbC)$ is defined as the first Chern class of the line bundle over $\oM_{g,n}$ formed by the cotangent lines at the $i$-th marked point. Intersection numbers $\<\tau_{k_1}\tau_{k_2}\ldots\tau_{k_n}\>^c_g$ are defined as follows:
$$
\<\tau_{k_1}\tau_{k_2}\ldots\tau_{k_n}\>^c_g:=\int_{\oM_{g,n}}\psi_1^{k_1}\psi_2^{k_2}\ldots\psi_n^{k_n}.
$$ 
The superscript $c$ here signals integration over the moduli of closed Riemann surfaces. Let us introduce variables $u,t_0,t_1,t_2,\ldots$ and consider the generating series 
$$
F^c(t_0,t_1,\ldots;u):=\sum_{\substack{g\ge 0,n\ge 1\\2g-2+n>0}}\frac{u^{2g-2}}{n!}\sum_{k_1,\ldots,k_n\ge 0}\<\tau_{k_1}\tau_{k_2}\ldots\tau_{k_n}\>^c_gt_{k_1}t_{k_2}\ldots t_{k_n}.
$$
E.~Witten (\cite{Wit91}) proved that the generating series~$F^c$ satisfies the so-called string equation and conjectured that the second derivative $\frac{\d^2 F^c}{\d t_0^2}$ is a solution of the KdV hierarchy. Witten's conjecture was proved by M.~Kontsevich (\cite{Kon92}). 

There is a reformulation of Witten's conjecture due to R.~Dijkgraaf, E.~Verlinde and H.~Verlinde (\cite{DVV91}) in terms of the Virasoro algebra. They defined certain quadratic differential operators $L_n$, $n\ge -1$, and proved that Witten's conjecture is equivalent to the equations 
\begin{gather}\label{eq: virasoro}
L_n\exp(F^c)=0,
\end{gather}
that are called the Virasoro equations. The operators $L_n$ satisfy the relation $[L_n,L_m]=(n-m)L_{n+m}$.  

In \cite{PST14} the authors initiated a study of the intersection theory on the moduli space of Riemann surfaces with boundary. They introduced intersection numbers on this moduli space and completely described them in genus~$0$. In higher genera the authors conjectured that the generating series of the intersection numbers satisfies certain partial differential equations that are analagous to the string, the KdV and the Virasoro equations. In~\cite{PST14} these equations were called the open string, the open KdV and the open Virasoro equations. 

The open KdV equations and the open Virasoro equations provide two different ways to describe the intersection numbers on the moduli space of Riemann surfaces with boundary. It is absolutely non-obvious that these two descriptions are equivalent and it was left in~\cite{PST14} as a conjecture. The main purpose of this paper is to prove this conjecture. We show that the system of the open KdV equations has a unique solution specified by a certain initial condition that corresponds to the simplest intersection numbers in genus $0$. The main result of the paper is the proof of the fact that this solution satisfies the open Virasoro equations. This proves that the open KdV and the open Virasoro equations give equivalent descriptions of the intersection numbers on the moduli space of Riemann surfaces with boundary. 


\subsection{Witten's conjecture and the Virasoro equations}\label{subsection:Witten conjecture}

In this section we review Witten's conjecture and its reformulation due to R.~Dijkgraaf, E.~Verlinde and H.~Verlinde. 

One of the basic properties of the generating series~$F^c$ is the so-called string equation~(\cite{Wit91}):
\begin{gather}\label{eq: string equation}
\frac{\d F^c}{\d t_0}=\sum_{n\ge 0}t_{n+1}\frac{\d F^c}{\d t_n}+\frac{t_0^2}{2u^2}.
\end{gather}

\subsubsection{KdV equations}

E.~Witten conjectured~(\cite{Wit91}) that the generating series $F^c$ is the logarithm of a tau-function of the KdV hierarchy. In particular, it means that it satisfies the following system:
\begin{gather}\label{eq:tau function}
u^{-2}\frac{2n+1}{2}\frac{\d^3 F^c}{\d t_0^2\d t_n}=\frac{\d^2 F^c}{\d t_0^2}\frac{\d^3 F^c}{\d t_0^2\d t_{n-1}}+\frac{1}{2}\frac{\d^3 F^c}{\d t_0^3}\frac{\d^2 F^c}{\d t_0\d t_{n-1}}+\frac{1}{8}\frac{\d^5 F^c}{\d t_0^4\d t_{n-1}},\quad n\ge 1.
\end{gather}
Moreover, E.~Witten showed~(\cite{Wit91}) that the KdV equations~\eqref{eq:tau function} together with the string equation~\eqref{eq: string equation} and the initial condition $\left.F^c\right|_{t_*=0}=0$ uniquely determine the power series~$F^c$.  

\subsubsection{Virasoro equations}

The Virasoro operators $L_n,n\ge -1$, are defined as follows:
\begin{multline*}
L_n:=\sum_{i\ge 0}\frac{(2i+2n+1)!!}{2^{n+1}(2i-1)!!}(t_i-\delta_{i,1})\frac{\d}{\d t_{i+n}}+\frac{u^2}{2}\sum_{i=0}^{n-1}\frac{(2i+1)!!(2n-2i-1)!!}{2^{n+1}}\frac{\d^2}{\d t_i\d t_{n-1-i}}\\
+\delta_{n,-1}\frac{t_0^2}{2 u^2}+\delta_{n,0}\frac{1}{16}.
\end{multline*}
They satisfy the commutation relation
\begin{gather}\label{eq: commutation}
[L_n,L_m]=(n-m)L_{n+m}.
\end{gather}
The Virasoro equations say that 
\begin{gather}\label{eq: Virasoro}
L_n\exp(F^c)=0,\quad n\ge -1.
\end{gather}
For $n=-1$, this equation is equivalent to the string equation~\eqref{eq: string equation}.

R.~Dijkgraaf, E.~Verlinde and H.~Verlinde (\cite{DVV91}) proved that Witten's conjecture is equivalent to the Virasoro equations. To be precise, they proved the following. Suppose a power series $F$ satisfies the string equation~\eqref{eq: string equation} and the KdV equations~\eqref{eq:tau function}. Then $F$ satisfies the Virasoro equations~\eqref{eq: Virasoro}. We review the proof of this fact in Appendix~\ref{section: closed virasoro}.


\subsection{Moduli of Riemann surfaces with boundary}

Here we briefly recall the basic definitions concerning the moduli space of Riemann surfaces with boundary. We refer the reader to~\cite{PST14} for details. 

Let $\Delta\in\mbC$ be the open unit disk, and let $\overline\Delta$ be its closure. An extendable embedding of the open disk~$\Delta$ in a closed Riemann surface $f\colon\Delta\to C$ is a holomorphic map which extends to a holomorphic embedding of an open neighbourhood of $\overline\Delta$. Two extendable embeddings are disjoint, if the images of $\overline\Delta$ are disjoint.

A Riemann surface with boundary $(X,\d X)$ is obtained by removing a finite positive number of disjoint extendable open disks from a connected compact Riemann surface. A compact Riemann surface is not viewed here as Riemann surface with boundary.

Given a Riemann surface with boundary $(X,\d X)$, we can canonically construct the double via the Schwartz reflection through the boundary. The double $D(X,\d X)$ of $(X,\d X)$ is a closed Riemann surface. The doubled genus of $(X,\d X)$ is defined to be the usual genus of $D(X,\d X)$.

On a Riemann surface with boundary $(X,\d X)$, we consider two types of marked points. The markings of interior type are points of $X\backslash\d X$. The markings of boundary type are points of~$\d X$. Let~$\mcM_{g,k,l}$ denote the moduli space of Riemann surfaces with boundary of doubled genus~$g$ with~$k$ distinct boundary markings and $l$ distinct interior markings. The moduli space~$\mcM_{g,k,l}$ is defined to be empty unless the stability condition $2g-2+k+2l>0$ is satisfied. The moduli space $\mcM_{g,k,l}$ is a real orbifold of real dimension $3g-3+k+2l$. 

The psi-classes $\psi_i\in H^2(\mcM_{g,k,l};\mbC)$ are defined as the first Chern classes of the cotangent line bundles for the interior markings. The authors of \cite{PST14} do not consider the cotangent lines at boundary points. Naively, open intersection numbers are defined by
\begin{gather}\label{eq: open intersections}
\left<\tau_{a_1}\tau_{a_2}\ldots\tau_{a_l}\sigma^k\right>^o_g:=\int_{\oM_{g,k,l}}\psi_1^{a_1}\psi_2^{a_2}\ldots\psi_l^{a_l}.
\end{gather}
To rigorously define the right-hand side of~\eqref{eq: open intersections}, at least three significant steps must be taken:
\begin{itemize}

\item A natural compactification $\mcM_{g,k,l}\subset\oM_{g,k,l}$ must be constructed. Candidates for $\oM_{g,k,l}$ are themselves real orbifolds with boundary $\d\oM_{g,k,l}$.

\item For integration over $\oM_{g,k,l}$ to be well-defined, boundary conditions of the integrand along $\d\oM_{g,k,l}$ must be specified.

\item Problems with an orientation should be solved, since the moduli space $\mcM_{g,k,l}$ is in general non-orientable.  

\end{itemize}
The authors of~\cite{PST14} completed all these steps and rigorously defined open intersection numbers in genus~$0$. Moreover, they obtained a complete description of them. In higher genera, even though open intersection numbers are not well-defined, the authors of~\cite{PST14} proposed a beautiful conjectural description of them that we are going to recall in the next section. 


\subsection{Open KdV and open Virasoro equations}

In this section we review the KdV and the Virasoro type equations from \cite{PST14} for the open intersection numbers~\eqref{eq: open intersections}. 

Introduce one more formal variable $s$ and define the generating series $F^o$ by
$$
F^o(t_0,t_1,\ldots,s;u):=\sum_{\substack{g,k,l\ge 0\\2g-2+k+2l>0}}\frac{u^{g-1}}{k!l!}\sum_{a_1,\ldots,a_l\ge 0}\<\tau_{a_1}\ldots\tau_{a_l}\sigma^k\>^o_gt_{a_1}\ldots t_{a_n} s^k.
$$ 
First of all, the authors of~\cite{PST14} conjectured the following analog of the string equation~\eqref{eq: string equation}:
\begin{gather}\label{eq: open string}
\frac{\d F^o}{\d t_0}=\sum_{i\ge 0}t_{i+1}\frac{\d F^o}{\d t_i}+u^{-1}s.
\end{gather}
They call it the open string equation. The authors also proved that the following initial condition holds:
\begin{gather}\label{eq: initial conditions}
\left.F^o\right|_{t_{\ge 1}=0}=u^{-1}\left(\frac{s^3}{6}+t_0 s\right).
\end{gather}

\subsubsection{Open KdV equations}

The authors of~\cite{PST14} conjectured that the generating series~$F^o$ satisfies the following system of equations: 
\begin{gather}\label{eq: open kdv}
\frac{2n+1}{2}\frac{\d F^o}{\d t_n}=u\frac{\d F^o}{\d s}\frac{\d F^o}{\d t_{n-1}}+u\frac{\d^2 F^o}{\d s\d t_{n-1}}+\frac{u^2}{2}\frac{\d F^o}{\d t_0}\frac{\d^2 F^c}{\d t_0\d t_{n-1}}-\frac{u^2}{4}\frac{\d^3 F^c}{\d t_0^2\d t_{n-1}},\quad n\ge 1.
\end{gather}
They call these equations the open KdV equations. It is clear that the open KdV equations~\eqref{eq: open kdv}, the initial condition~\eqref{eq: initial conditions} and the potential $F^c$ uniquely determine the series $F^o$. On the other hand, the existence of such a solution is completely non-obvious and we will prove it in this paper.

\subsubsection{Open Virasoro equations}

In~\cite{PST14} the authors introduced the following operators:
\begin{gather*}
\cL_n:=L_n+\left(u^n s\frac{\d^{n+1}}{\d s^{n+1}}+\frac{3n+3}{4}u^n\frac{\d^n}{\d s^n}\right),\quad n\ge -1.
\end{gather*}
These operators satisfy the same commutation relation as the operators $L_n$:
\begin{gather}\label{eq: open commutation}
[\cL_n,\cL_m]=(n-m)\cL_{n+m}.
\end{gather}

In \cite{PST14} the authors conjectured the following analog of the Virasoro equations~\eqref{eq: Virasoro}:
\begin{gather}\label{eq: open virasoro}
\cL_n\exp(F^o+F^c)=0,\quad n\ge -1.
\end{gather}
Clearly, equations~\eqref{eq: open virasoro}, the initial condition $F^o|_{t_*=0}=u^{-1}\frac{s^3}{6}$ and the potential $F^c$ completely determine the series $F^o$.


\subsection{Main result}

Here we formulate two main results of the paper.

\begin{theorem}\label{theorem: existence}
1. The system of the open KdV equations~\eqref{eq: open kdv} has a unique solution that satisfies the initial condition~\eqref{eq: initial conditions}.\\
2. This solution satisfies the following equation:
\begin{gather}\label{eq: s-string equation}
\frac{\d F^o}{\d s}=u\left(\frac{1}{2}\left(\frac{\d F^o}{\d t_0}\right)^2+\frac{1}{2}\frac{\d^2 F^o}{\d t_0^2}+\frac{\d^2 F^c}{\d t_0^2}\right).
\end{gather}

\end{theorem}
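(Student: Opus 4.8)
The plan is to establish uniqueness by a direct recursion, and to obtain existence together with the $s$-string equation~\eqref{eq: s-string equation} by passing to $\phi:=\exp(F^o)$, a substitution that simultaneously linearizes both systems.

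For uniqueness I would expand $F^o$ as a power series in $t_1,t_2,\dots$ over the ring of functions of $t_0,s,u$, with coefficients $c_{\vec d}:=\bigl(\prod_{i\ge1}\partial_{t_i}^{d_i}F^o\bigr)\big|_{t_{\ge1}=0}$. The coefficient $c_{\vec 0}$ is fixed by~\eqref{eq: initial conditions}; for $\vec d\neq0$, letting $n$ be the largest index with $d_n>0$ and $e_i$ the $i$-th standard basis multi-index, I would apply $\prod_{i\ge1}\partial_{t_i}^{(\vec d-e_n)_i}$ to the open KdV equation~\eqref{eq: open kdv} of index $n$ and set $t_{\ge1}=0$. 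Ordering multi-indices lexicographically by $\bigl(\sum_i d_i,\sum_i i\,d_i\bigr)$, a term-by-term inspection shows that every contribution on the right-hand side is built either from $F^c$ alone or from coefficients $c_{\vec e}$ that are lexicographically smaller: the product term and the $F^c$-terms strictly decrease $\sum_i d_i$, whereas the linear term $u\,\partial_s\partial_{t_{n-1}}F^o$ decreases $\sum_i d_i$ when $n=1$ and, when $n\ge2$, keeps it fixed while lowering $\sum_i i\,d_i$ by one. As the prefactor $\tfrac{2n+1}{2}$ is invertible, this determines every $c_{\vec d}$ recursively, which gives uniqueness and produces a well-defined candidate series.

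To prove existence and the $s$-string equation I would pass to $\phi=\exp(F^o)$. Multiplying~\eqref{eq: open kdv} by $\phi$ and repeatedly using $\partial_X\phi=(\partial_XF^o)\phi$, the open KdV equations become the \emph{linear} equations
\begin{gather*}
\tfrac{2n+1}{2}\,\partial_{t_n}\phi=u\,\partial_s\partial_{t_{n-1}}\phi+\tfrac{u^2}{2}\bigl(\partial_{t_0}\partial_{t_{n-1}}F^c\bigr)\,\partial_{t_0}\phi-\tfrac{u^2}{4}\bigl(\partial_{t_0}^2\partial_{t_{n-1}}F^c\bigr)\,\phi,\qquad n\ge1,
\end{gather*}
while the $s$-string equation~\eqref{eq: s-string equation} becomes the linear heat-type equation
\begin{gather*}
\partial_s\phi=\tfrac{u}{2}\,\partial_{t_0}^2\phi+u\,\bigl(\partial_{t_0}^2F^c\bigr)\,\phi .
\end{gather*}
The crucial observation is that the second equation identifies $\partial_s$ with the Lax operator $L=\tfrac{u}{2}\partial_{t_0}^2+u\,\partial_{t_0}^2F^c$, and that using it to eliminate the single $s$-derivative from the first equation turns the latter into a flow $\partial_{t_n}\phi=\mathcal B_n\phi$, where $\mathcal B_n$ is a differential operator in $\partial_{t_0}$, defined recursively in $n$, whose coefficients are polynomials in the $\partial_{t_0}$-derivatives of $\partial_{t_0}^2F^c$. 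In other words $\phi$ is precisely the Baker--Akhiezer function of the KdV hierarchy attached to $\tau=\exp(F^c)$, with $s$ as spectral variable; note that $\phi$ has the WKB form $\exp(\,\cdot\,/u)\times(\text{series in }u)$, so that $\log\phi$ is again an honest Laurent series in $u$, and the two displays above are strictly equivalent to~\eqref{eq: open kdv} and~\eqref{eq: s-string equation}.

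I would then treat the two displays jointly as one over-determined linear system for $\phi$ and show that all of its flows commute pairwise. The conditions $[\partial_{t_n}-\mathcal B_n,\partial_{t_m}-\mathcal B_m]=0$ and $[\partial_s-L,\partial_{t_n}-\mathcal B_n]=0$ reduce, by the standard zero-curvature computation, to the Lax equation $\partial_{t_n}L=[\mathcal B_n,L]$ for the potential $\partial_{t_0}^2F^c$, i.e.\ exactly to the closed KdV equations~\eqref{eq:tau function}, which we may assume. Since the prescribed value $\phi|_{t_{\ge1}=0}=\exp\!\bigl(u^{-1}(s^3/6+t_0s)\bigr)$ already satisfies the heat-type equation on the slice $t_{\ge1}=0$ — there $\partial_{t_0}^2F^c=t_0/u^2$ and the verification is immediate — the compatible system admits a solution $\phi$ with this initial value. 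Setting $F^o:=\log\phi$ then solves the open KdV equations with the initial condition (existence) and simultaneously satisfies the heat-type equation, that is the $s$-string equation~\eqref{eq: s-string equation}; by the uniqueness already established, it is the solution. The main obstacle is this compatibility computation: one must check that the flow commutators collapse to~\eqref{eq:tau function}, and along the way fix the $(2i+1)!!$- and $u$-normalizations so that $\partial_s$ is genuinely the Lax operator and the operators $\mathcal B_n$ are the honest Gelfand--Dickey operators of the hierarchy.
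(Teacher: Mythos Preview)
Your approach is correct and coincides with the simplified wave-function argument the paper sketches in Section~\ref{subsection:wave}: the substitution $\phi=e^{F^o}$ turns the open KdV equations and the $s$-string equation into the linear system $\partial_{t_n}\phi=\mathcal B_n\phi$, $\partial_s\phi=L\phi$ for the KdV Lax operator, and the compatibility of this system is exactly the closed KdV hierarchy. The paper's \emph{primary} proof, however, is organized differently. It stays with $v=F^o$ rather than $\phi$, introduces the Burgers--KdV hierarchy through the explicit differential polynomials $R_n,Q_n\in\mcA_{v,w}$, and proves the commutativity of all the flows $V_{R_n,\partial_xK_n}$ and $V_{Q_n,0}$ by a hands-on induction (Sections~\ref{subsection: Burgers}--\ref{subsection: all flows}) that never uses pseudodifferential operators; once commutativity is in place, existence and the $s$-string equation follow in a few lines in Section~\ref{section: existence}. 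Your route is shorter and more conceptual (it explains \emph{why} the system is integrable, via the Baker--Akhiezer picture); the paper's main route is more elementary and self-contained, at the cost of the somewhat delicate Lemmas in Section~\ref{subsection: all flows}.

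One small imprecision in your uniqueness argument: you assert that the product term $u\,(\partial_sF^o)(\partial_{t_{n-1}}F^o)$ strictly decreases $\sum_i d_i$. When $n\ge 2$ and all the Leibniz derivatives fall on the second factor, the contribution is $(\partial_s c_{\vec 0})\cdot c_{\vec d-e_n+e_{n-1}}$, which keeps $\sum_i d_i$ fixed and only lowers $\sum_i i\,d_i$. Since you are already using the lexicographic order on the pair $(\sum_i d_i,\sum_i i\,d_i)$, the recursion still terminates; just correct that sentence.
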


\begin{theorem}\label{theorem: main}
The series $F^o$ determined by Theorem \ref{theorem: existence} satisfies the open Virasoro equations~\eqref{eq: open virasoro}.
\end{theorem}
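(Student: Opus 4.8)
The plan is to follow the template of the Dijkgraaf--Verlinde--Verlinde proof reviewed in Appendix~\ref{section: closed virasoro}, now in the presence of the boundary variable $s$. Set $Z:=\exp(F^o+F^c)$ and recall that, since $F^c$ obeys the string equation~\eqref{eq: string equation} and the KdV equations~\eqref{eq:tau function}, it satisfies the closed Virasoro equations $L_n\exp(F^c)=0$. The first step is to peel off the closed part: dividing $\cL_n Z$ by $Z$, expanding the $t$-derivatives through $\exp(F^o+F^c)=\exp(F^o)\exp(F^c)$, and subtracting the vanishing quantity $L_n\exp(F^c)/\exp(F^c)$, the equation $\cL_n Z=0$ turns into an explicit identity involving only derivatives of $F^o$ and of $F^c$. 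Note that the $s$-dependent part $u^n s\frac{\d^{n+1}}{\d s^{n+1}}+\frac{3n+3}{4}u^n\frac{\d^n}{\d s^n}$ of $\cL_n$ acts only on $\exp(F^o)$ and produces Bell-type polynomials in the $s$-derivatives $\frac{\d^j F^o}{\d s^j}$.

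Second, I would use the commutation relations~\eqref{eq: open commutation} to cut the infinite family down to two base cases. If $\cL_a Z=0$ and $\cL_b Z=0$ with $a\neq b$, then $(a-b)\cL_{a+b}Z=[\cL_a,\cL_b]Z=\cL_a(\cL_b Z)-\cL_b(\cL_a Z)=0$, hence $\cL_{a+b}Z=0$. Since $\{\cL_n\}_{n\ge -1}$ is generated as a Lie algebra by $\cL_{-1}$ and $\cL_2$ --- one has $\cL_1=\tfrac13[\cL_2,\cL_{-1}]$, $\cL_0=\tfrac12[\cL_1,\cL_{-1}]$, and $\cL_{n+1}=\tfrac{1}{n-1}[\cL_n,\cL_1]$ for $n\ge 2$ --- it suffices to prove $\cL_{-1}Z=0$ and $\cL_2 Z=0$.

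The case $n=-1$ is the easy one. Computing the identity of the first step for $n=-1$ and using the closed string equation to remove the pure-$F^c$ contribution, one finds that $\cL_{-1}Z=0$ is exactly the open string equation~\eqref{eq: open string} for $F^o$. I would therefore first check that the open KdV solution of Theorem~\ref{theorem: existence} satisfies~\eqref{eq: open string}; this can be deduced from the initial condition~\eqref{eq: initial conditions} and the open KdV equations by the uniqueness mechanism of Theorem~\ref{theorem: existence}, since the defect of~\eqref{eq: open string} satisfies a recursion forcing it to vanish.

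The real content is the base case $\cL_2 Z=0$. The mechanism is to eliminate all $s$-derivatives and all higher $t$-derivatives from the corresponding identity: first apply the $s$-string equation~\eqref{eq: s-string equation}, iterated, to rewrite each $\frac{\d^j F^o}{\d s^j}$ (up to $j=3$ for $\cL_2$) as a differential polynomial in the $t_0$-derivatives of $F^o$ and $F^c$; then use the open string equation together with the open KdV equations~\eqref{eq: open kdv} to rewrite the remaining derivatives $\frac{\d F^o}{\d t_i}$; and finally invoke the closed KdV and Virasoro relations to clear the pure-$F^c$ terms. After these substitutions the identity should reduce to a triviality. I expect this to be the main obstacle: the iteration of~\eqref{eq: s-string equation} needed to remove $\frac{\d^3 F^o}{\d s^3}$ generates a large number of terms, and reconciling them with the double-factorial weights $\frac{(2i+2n+1)!!}{2^{n+1}(2i-1)!!}$ in $L_n$ and with the normalization $\frac{2n+1}{2}$ of~\eqref{eq: open kdv} requires delicate bookkeeping. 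A cleaner route that I would pursue in parallel is to verify the identities for all $n\ge 1$ at once, by packaging the operators $\cL_n$ into a single generating-series (loop) equation so that the open KdV equations apply uniformly rather than case by case.
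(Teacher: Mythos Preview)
Your overall plan is sound and close to the paper's: peel off the closed Virasoro contribution, use the commutation relations~\eqref{eq: open commutation} to reduce to finitely many base cases, identify $\cL_{-1}Z=0$ with the open string equation, and verify a higher base case by combining open KdV with the $s$-string equation~\eqref{eq: s-string equation}. Your reduction to just $\cL_{-1}$ and $\cL_2$ via Lie-algebra generation is correct and slightly sharper than the paper's, which reduces only to $n\le 2$ and then treats $n=-1,0,1,2$ in sequence.

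The real difference is in how the hard case is organized. You propose to attack $\cL_2 Z=0$ head-on: iterate~\eqref{eq: s-string equation} to eliminate $\d_s^j F^o$ up to $j=3$, then feed the open KdV relations into the $t$-derivatives. The paper instead proves a one-step recursion (Lemma~\ref{lemma: main lemma}): for every $n\ge -1$,
\[
\frac{\cL_{n+1}\tau}{\tau}\;=\;(uF^o_s+u\d_s)\,\frac{\cL_n\tau}{\tau}\;+\;(\text{explicit finite remainder}),
\]
where the first-order operator $uF^o_s+u\d_s$ is precisely what the open KdV equations~\eqref{eq: open kdv} produce when one shifts $F^o_{i+n+1}\mapsto F^o_{i+n}$ term by term in $L^{(1)}_{n+1}$. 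This converts the infinite sum $\sum_i(t_i-\delta_{i,1})\,\d_{t_{i+n+1}}$ into a single shift and leaves only a short residual, which is $0$ for $n=-1$, is exactly~\eqref{eq: s-string equation} for $n=0$, and is a finite differential-polynomial identity in $v=F^o$ and $w=F^c_{00}$ for $n=1$. The paper then walks up $\cL_{-1}\to\cL_0\to\cL_1\to\cL_2$. Your direct route should succeed, but the bookkeeping you flag as the obstacle---matching the double-factorial weights across the infinite sum while simultaneously unwinding three levels of $s$-derivatives---is exactly what this recursion is designed to absorb; once you write it down, the ``generating-series loop equation'' you mention as a parallel route is effectively already in hand.
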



\subsection{Burgers-KdV hierarchy and descendants of $s$}

In Section~\ref{section:Burgers-KdV} we will construct a certain system of evolutionary partial differential equations with one spatial variable. It will be called the Burgers-KdV hierarchy. We will prove that the series $F^o$ determined by Theorem~\ref{theorem: existence} satisfies the half of the equations of this hierarchy. The remaining flows of the Burgers-KdV hierarchy suggest a way to introduce new variables $s_1,s_2,\ldots$ in the open potential $F^o$. These variables can be viewed as descendants of $s$. We hope that our idea can help to give a geometrical construction of descendants of $s$, at least in genus $0$.  


\subsection{Open KdV equations and the wave function of the KdV hierarchy}

In the work~\cite{Bur14}, that appeared while this paper was under consideration in the journal, we observed that the open KdV equations are closely related to the equations for the wave function of the KdV hierarchy. Using this observation our original proof of Theorem~\ref{theorem: existence} can be simplified. We discuss it in Section~\ref{subsection:wave}.

\subsection{Acknowledgements} 

The author would like to thank R. Pandharipande, J. Solomon and R. Tessler for very useful discussions. 

We would like to thank the anonymous referee for valuable remarks and suggestions that allowed us to improve the exposition of this paper.

The author was supported by grant ERC-2012-AdG-320368-MCSK in the group of R. Pandharipande at ETH Zurich, by Russian Federation Government grant no. 2010-220-01-077 (ag. no. 11.634.31.0005), the grants RFBR-10-010-00678, NSh-4850.2012.1, the Moebius Contest Foundation for Young Scientists and "Dynasty" foundation.


\subsection{Organization of the paper}

In Section~\ref{section:evolutionary pde} we recall some basic facts about evolutionary PDEs with one spatial variable and give a slight reformulation of Witten's conjecture. 

In Section~\ref{section:Burgers-KdV} we construct the Burgers-KdV hierarchy and prove that it has a solution for arbitrary polynomial initial conditions. We also construct a specific solution of the half of the Burgers-KdV hierarchy that satisfies the open string equation~\eqref{eq: open string}.

Section~\ref{section: existence} contains the proofs of Theorems \ref{theorem: existence} and \ref{theorem: burgers}. 

In Section~\ref{section: open virasoro} we prove Theorem~\ref{theorem: main}.

In Appendix~\ref{section: closed virasoro} we revisit the proof of the equivalence of the KdV and the Virasoro equations for the intersection numbers on the moduli space of stable curves. 


\section{Evolutionary partial differential equations}\label{section:evolutionary pde}

In this section we recall some basic facts about evolutionary PDEs with one spatial variable. We also review the construction of the KdV hierarchy and give a slight reformulation of Witten's conjecture that will be useful in the subsequent sections. All this material is well-known. We refer the reader to the book~\cite{Olv86} for the details about this subject.


\subsection{Ring of differential polynomials}

Let us fix an integer $N\ge 1$. Consider variables $v^i_j$, $1\le i\le N$, $j\ge 0$. We will often denote $v^i_0$ by $v^i$ and use an alternative notation for the variables $v^i_1,v^i_2,\ldots$:
$$
v^i_x:=v^i_1,\quad v^i_{xx}:=v^i_2,\ldots.
$$
Denote by $\mcA_{v^1,v^2,\ldots,v^N}$ the ring of polynomials in the variables $u,u^{-1}$ and $v^i_j$. The elements of~$\mcA_{v^1,\ldots,v^N}$ will be called differential polynomials. 

An operator $\d_x\colon\mcA_{v^1,\ldots,v^N}\to\mcA_{v^1,\ldots,v^N}$ is defined as follows:
$$
\d_x:=\sum_{i=1}^N\sum_{s\ge 0}v^i_{s+1}\frac{\d}{\d v^i_s}.
$$ 

Consider now a sequence of differential polynomials $P^i_j(v,v_x,\ldots;u)\in\mcA_{v^1,\ldots,v^N}$, $1\le i\le N$, $j\ge 0$. Consider the variables $v^i$ as formal power series in $x,\tau_0,\tau_1,\ldots$ with the coefficients from~$\mbC[u,u^{-1}]$. A system of evolutionary PDEs with one spatial variable is a system of the form:
\begin{gather}\label{eq: general system}
\frac{\d v^i}{\d\tau_j}=P^i_j(v,v_x,\ldots;u),\quad 1\le i\le N,\quad j\ge 0.
\end{gather}


\subsection{Existence of a solution}

Here we give a sufficient condition for system~\eqref{eq: general system} to have a solution. Let $P^1,\ldots,P^N\in\mcA_{v^1,\ldots,v^N}$ be some differential polynomials. Define an operator~$V_{P^1,\ldots,P^N}\colon\mcA_{v^1,\ldots,v^N}\to\mcA_{v^1,\ldots,v^N}$ by
\begin{gather*}
V_{P^1,\ldots,P^N}:=\sum_{i=1}^N\sum_{j\ge 0}(\d_x^j P^i)\frac{\d}{\d v^i_j}.
\end{gather*} 
Denote the space of all these operators by $\mcE_{v^1,\ldots,v^N}$. It is a Lie algebra: for $Q^1,Q^2,\ldots,Q^N\in\mcA_{v^1,\ldots,v^N}$, we have
\begin{gather*}
[V_{P^1,\ldots,P^N},V_{Q^1,\ldots,Q^N}]=V_{R^1,\ldots,R^N},\quad\text{where}\quad R^i=V_{P^1,\ldots,P^N}Q^i-V_{Q^1,\ldots,Q^N}P^i.
\end{gather*}

Consider an operator $O=\sum_{i\ge 0}O_i\d_x^i$, $O_i\in\mcA_{v^1,\ldots,v^N}$. We will use the following notation:
$$
V_{P_1,\ldots,P^N}\cdot O:=\sum_{i\ge 0}\left(V_{P^1,\ldots,P^N}O_i\right)\d_x^i.
$$ 

Let us again consider system~\eqref{eq: general system}. The following lemma is well-known (see e.g. \cite{Olv86}).
\begin{lemma}\label{lemma:existence}
Suppose that, for any $i,j\ge 0$, we have $[V_{P_i^1,\ldots,P_i^N},V_{P_j^1,\ldots,P_j^N}]=0$. Then, for an arbitrary initial condition $\left.v^i\right|_{\tau_j=0}=f^i(x,u)$, where $f^i(x,u)\in\mbC[x,u,u^{-1}]$, system~\eqref{eq: general system} has a unique solution.
\end{lemma}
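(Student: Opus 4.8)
The plan is to construct the solution directly as a formal power series in $\tau_0,\tau_1,\ldots$ whose coefficients are the iterated $\tau$-derivatives at $\tau_*=0$, and to show that the commutativity hypothesis is exactly what makes these coefficients well defined. The starting observation is that along any solution the operator $\d_{\tau_j}$ acts on differential polynomials through the vector field $V_j:=V_{P^1_j,\ldots,P^N_j}$: since $x$ and the times $\tau_j$ are independent, $\d_{\tau_j}v^i_s=\d_{\tau_j}\d_x^s v^i=\d_x^s P^i_j$, and the chain rule then gives $\d_{\tau_j}Q(v,v_x,\ldots;u)=(V_jQ)(v,v_x,\ldots;u)$ for every differential polynomial $Q\in\mcA_{v^1,\ldots,v^N}$. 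Iterating this identity shows that for a solution the Taylor coefficient of $v^i$ indexed by a multi-index $\mathbf m=(m_0,m_1,\ldots)$ of finite support equals $(V_0^{m_0}V_1^{m_1}\cdots\, v^i)$ evaluated at $v^i_s=\d_x^s f^i(x,u)$. This already yields uniqueness, since the entire Taylor expansion is then forced by the $f^i$.

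For existence I would turn this formula into a definition. First I would record two elementary facts about the vector fields $V_j$: each $V_j$ is a derivation of $\mcA_{v^1,\ldots,v^N}$, and each commutes with $\d_x$ — the latter because $[\,\d_x,V_j\,]$ is a derivation that annihilates every generator $v^i_s$ (one checks $V_j\d_x v^i_s=\d_x^{s+1}P^i_j=\d_x V_j v^i_s$, and $u,u^{-1}$ are fixed by both operators). Next, the hypothesis $[V_i,V_j]=0$ guarantees that the composite $V_0^{m_0}V_1^{m_1}\cdots$ does not depend on the order in which the factors are applied, so the proposed coefficients $c^i_{\mathbf m}:=(V_0^{m_0}V_1^{m_1}\cdots\, v^i)|_{v^i_s=\d_x^s f^i}$ are unambiguous; each lies in $\mbC[x,u,u^{-1}]$ because $f^i$ does and the $P^i_j$ are differential polynomials. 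I then set $v^i:=\sum_{\mathbf m}c^i_{\mathbf m}\,\frac{\tau_0^{m_0}\tau_1^{m_1}\cdots}{m_0!\,m_1!\cdots}$.

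The remaining step, and the real content, is to verify that this series solves the system. The clean way is to introduce, for every differential polynomial $Q$, the series $\widehat Q:=\sum_{\mathbf m}(V_0^{m_0}V_1^{m_1}\cdots\, Q)|_{v^i_s=\d_x^s f^i}\,\frac{\tau^{\mathbf m}}{\mathbf m!}$, so that $\widehat{v^i}=v^i$. Because the $V_j$ are commuting derivations, the generalized Leibniz rule makes $Q\mapsto\widehat Q$ a ring homomorphism; because each $V_j$ commutes with $\d_x$ one gets $\widehat{\d_x Q}=\frac{\d}{\d x}\widehat Q$; and $\widehat u=u$. Together these force $\widehat{v^i_s}=\frac{\d^s}{\d x^s}v^i$, hence $\widehat Q=Q(v,v_x,\ldots;u)$ for all $Q$. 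Applying $\d_{\tau_j}$ to $v^i=\widehat{v^i}$ lowers $m_j$ by one and, using the order-independence of the $V^{\mathbf m}$, reassembles into $\widehat{V_j v^i}=\widehat{P^i_j}=P^i_j(v,v_x,\ldots;u)$, so the system holds. I expect the main obstacle to be precisely this homomorphism bookkeeping: checking that $\widehat{\cdot}$ respects products and $\d_x$ is where the commutativity hypothesis and the $\d_x$-commutation are genuinely used, and it is easy to tangle the combinatorics of the generalized Leibniz rule with the order-independence coming from $[V_i,V_j]=0$.
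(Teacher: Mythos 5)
Your proposal is correct, but there is nothing in the paper to compare it against: the paper deliberately gives no proof of this lemma, stating only that it is well-known and citing Olver's book. Your argument is in fact the standard one that such a citation points to — the formal-flow construction for commuting evolutionary vector fields — and it is complete. The three pillars are all in place and each is justified where it is needed: (i) on a solution, $\d_{\tau_j}$ of any evaluated differential polynomial $Q$ equals the evaluation of $V_jQ$ (chain rule plus the fact that evaluation intertwines the formal $\d_x$ with $d/dx$), which forces all Taylor coefficients in the $\tau$-variables and gives uniqueness; (ii) the hypothesis $[V_i,V_j]=0$ makes the coefficients $c^i_{\mathbf m}=\bigl(V_0^{m_0}V_1^{m_1}\cdots v^i\bigr)\big|_{v^i_s=\d_x^sf^i}$ order-independent, hence well defined; (iii) the map $Q\mapsto\widehat Q$ is a ring homomorphism commuting with $\d_x$ (the identity $[\d_x,V_j]=0$ holds because the commutator is a derivation killing every generator $v^i_s$, exactly as you say), which upgrades the coefficient formula to the statement that $\widehat Q$ is $Q$ evaluated at the constructed series, and then termwise $\tau_j$-differentiation reassembles — via commutativity — into $\widehat{P^i_j}$. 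One small observation worth making: uniqueness actually requires no commutativity at all (equality of mixed $\tau$-partials does the work), so the hypothesis is used only for existence; your write-up already reflects this division of labor correctly.
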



\subsection{KdV hierarchy}\label{subsection:kdv hierarchy}

Consider a variable~$w$ and the ring $\mcA_w$. Define differential polynomials $K_n\in\mcA_{w}$, $n\ge 0$, by the following recursion:
\begin{align}
&K_0=w,\notag\\
&\d_x K_n=\frac{2u^2}{2n+1}\left(w\d_x+\frac{1}{2}w_x+\frac{1}{8}\d_x^3\right)K_{n-1},\quad\text{for $n\ge 1$}.\label{eq: kdv recursion}
\end{align}
It is a non-trivial fact that the right-hand side of \eqref{eq: kdv recursion} lies in the image of the operator $\d_x$ (see e.g. \cite{MJD00}). So the recursion~\eqref{eq: kdv recursion} determines a differential polynomial $K_n$ up to a polynomial in $u,u^{-1}$. This ambiguity should be fixed by the condition $\left.K_n\right|_{w_i=0}=0$.

Consider now the variable $w$ as a power series in variables $x,t_1,t_2,\ldots$ with the coefficients from $\mbC[u,u^{-1}]$. The KdV hierarchy is the following system of partial differential equations:
\begin{gather}\label{eq: kdv hierarchy}
\frac{\d w}{\d t_n}=\d_x K_n,\quad n\ge 1.
\end{gather}

Another form of Witten's conjecture says that the second derivative $\frac{\d^2 F^c}{\d t_0^2}$ is a solution of the KdV hierarchy~\eqref{eq: kdv hierarchy}. Here we identify $x$ with $t_0$. This form of Witten's conjecture is equivalent to the form that was stated in Section~\ref{subsection:Witten conjecture} (\cite{Wit91}). Let us formulate it precisely. Let $w(x=t_0,t_1,t_2,\ldots;u)$ be the solution of the KdV hierarchy~\eqref{eq: kdv hierarchy} specified by the initial condition $w|_{t_{\ge 1}=0}=u^{-2}x$. Let 
$$
S:=\frac{\d}{\d t_0}-\sum_{n\ge 0}t_{n+1}\frac{\d}{\d t_n}.
$$  
It is easy to show that $Sw=u^{-2}$ and there exists a unique power series $F(t_0,t_1,\ldots;u)$ such that $w=\frac{\d^2 F}{\d t_0^2}$, $S F=\frac{t_0^2}{2u^2}$ and $F|_{t_*=0}=0$. Moreover, we have (\cite{Wit91})
$$
\frac{\d^2 F}{\d t_0\d t_n}=\left.K_n\right|_{w_i=\frac{\d^{i+2}F}{\d t_0^{i+2}}}
$$ 
and, therefore, $F$ satisfies system~\eqref{eq:tau function}.


\section{Burgers-KdV hierarchy}\label{section:Burgers-KdV}

In this section we construct the Burgers-KdV hierarchy and prove that its flows satisfy the commutation relation from Lemma~\ref{lemma:existence}. This guarantees that the hierarchy has a solution for arbitrary polynomial initial conditions. We also construct a specific solution of the half of the Burgers-KdV hierarchy that satisfies the open string equation~\eqref{eq: open string}. Finally, we discuss a relation of the Burgers-KdV hierarchy to the equations for the wave function of the KdV hierarchy. 


\subsection{Construction}

Consider an extra variable~$v$ and the ring $\mcA_{v,w}$. Define differential polynomials $R_n,Q_n\in\mcA_{v,w}$, $n\ge 0$, as follows:
\begin{align*}
&R_0=v_x,\\
&R_n=\frac{2 u^2}{2n+1}\left[\left(\frac{1}{2}\d_x^2+v_x\d_x+\frac{v_x^2+v_{xx}}{2}+w\right)R_{n-1}+\frac{1}{2}v_x K_{n-1}+\frac{3}{4}\d_x K_{n-1}\right],\quad\text{for $n\ge 1$};\\
&Q_0=u\left(\frac{v_x^2+v_{xx}}{2}+w\right),\\
&Q_n=\frac{u^2}{n+1}\left(\frac{1}{2}\d_x^2+v_x\d_x+\frac{v_x^2+v_{xx}}{2}+w\right)Q_{n-1},\quad\text{for $n\ge 1$}.
\end{align*}

We call the Burgers-KdV hierarchy the following system:
\begin{align*}
&\frac{\d v}{\d t_n}=R_n,\quad n\ge 1; & & \frac{\d w}{\d t_n}=\d_xK_n,\quad n\ge 1;\\
&\frac{\d v}{\d s_n}=Q_n,\quad n\ge 0; & & \frac{\d w}{\d s_n}=0,\quad n\ge 0.
\end{align*}

We see that $w$ is just a solution of the KdV hierarchy. The simplest equation for $v$ is 
$$
\frac{\d v}{\d s_0}=u\left(\frac{v_x^2+v_{xx}}{2}+w\right).
$$
If we put $w=0$, then it coincides with the potential Burgers equation (see e.g. \cite{Olv86}). In this case the whole hierarchy reduces to the Burgers hierarchy (see e.g. \cite{Olv86}). This explains why we call the constructed hierarchy the Burgers-KdV hierarchy.

The system
\begin{align*}
&\frac{\d v}{\d t_n}=R_n,\quad n\ge 1;       & & \frac{\d w}{\d t_n}=\d_xK_n,\quad n\ge 1;\\
&\frac{\d v}{\d s}=u\left(\frac{v_x^2+v_{xx}}{2}+w\right); & & \frac{\d w}{\d s}=0.
\end{align*}
will be called the half of the Burgers-KdV hierarchy.

Another result of the paper is the following theorem.

\begin{theorem}\label{theorem: burgers}
Let $F^o$ be the power series determined by Theorem~\ref{theorem: existence}. Then the pair $v=F^o, w=\frac{\d^2 F^c}{\d t_0^2}$ satisfies the half of the Burgers-KdV hierarchy.
\end{theorem}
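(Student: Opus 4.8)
The plan is to check the four families of equations making up the half of the Burgers--KdV hierarchy one at a time, after the substitution $v=F^o$, $w=\frac{\d^2F^c}{\d t_0^2}$ and the identification $x=t_0$. Two of them are immediate. Since $F^c$ is independent of $s$, the equation $\frac{\d w}{\d s}=0$ holds on the nose. The flows $\frac{\d w}{\d t_n}=\d_xK_n$ are precisely the KdV form of Witten's conjecture recorded at the end of Section~\ref{subsection:kdv hierarchy} for $w=\frac{\d^2F^c}{\d t_0^2}$; that same discussion supplies the identities $\frac{\d^2F^c}{\d t_0\d t_n}=K_n$ and hence $\frac{\d^3F^c}{\d t_0^2\d t_n}=\d_xK_n$, which I will use repeatedly. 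The $s$-flow for $v$, namely $\frac{\d v}{\d s}=u\bigl(\frac{v_x^2+v_{xx}}{2}+w\bigr)$, is exactly the $s$-string equation~\eqref{eq: s-string equation} furnished by Theorem~\ref{theorem: existence}.

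The only substantial family is $\frac{\d v}{\d t_n}=R_n$, $n\ge1$, which I would establish by induction on $n$. The base case $n=0$ reads $R_0=v_x=\frac{\d F^o}{\d t_0}$, true by $x=t_0$. For the inductive step, assume $\frac{\d F^o}{\d t_m}=R_m$ for $0\le m\le n-1$ and start from the open KdV equation~\eqref{eq: open kdv}. I would first rewrite its right-hand side using the $s$-string equation~\eqref{eq: s-string equation}: it lets me replace $\frac{\d F^o}{\d s}$ by $u\bigl(\frac{v_x^2+v_{xx}}{2}+w\bigr)$, and, after differentiating~\eqref{eq: s-string equation} in $t_{n-1}$ and commuting $\d_x=\d_{t_0}$ past $\d_{t_{n-1}}$, it lets me evaluate the mixed derivative using the inductive hypothesis and the KdV flow for $w$:
\[
\frac{\d^2F^o}{\d s\,\d t_{n-1}}=u\Bigl(v_x\,\d_xR_{n-1}+\tfrac12\d_x^2R_{n-1}+\d_xK_{n-1}\Bigr).
\]

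Substituting these expressions, together with $\frac{\d^2F^c}{\d t_0\d t_{n-1}}=K_{n-1}$ and $\frac{\d^3F^c}{\d t_0^2\d t_{n-1}}=\d_xK_{n-1}$, into~\eqref{eq: open kdv} and collecting terms, the four summands reorganize as follows: the pieces built from $R_{n-1}$ assemble into $u^2\bigl(\frac12\d_x^2+v_x\d_x+\frac{v_x^2+v_{xx}}{2}+w\bigr)R_{n-1}$; the term coming from $\frac{u^2}{2}\frac{\d F^o}{\d t_0}\frac{\d^2F^c}{\d t_0\d t_{n-1}}$ contributes $\frac{u^2}{2}v_xK_{n-1}$; and the two contributions proportional to $\d_xK_{n-1}$, carrying coefficients $+u^2$ and $-\frac{u^2}{4}$, combine into $\frac{3u^2}{4}\d_xK_{n-1}$. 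Dividing by $\frac{2n+1}{2}$ reproduces verbatim the recursion defining $R_n$, which closes the induction and proves $\frac{\d F^o}{\d t_n}=R_n$.

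I expect the only real difficulty to be careful bookkeeping rather than any conceptual obstruction. The delicate points are to keep the identification $x=t_0$ consistent throughout, to invoke the inductive hypothesis at exactly the step where $\d_{t_{n-1}}$ hits $v_x$ and $v_{xx}$ inside the differentiated $s$-string equation, and to verify that the two $\d_xK_{n-1}$ contributions add up to the coefficient $\frac34$ that appears in the definition of $R_n$. Once the open KdV recursion~\eqref{eq: open kdv} and the $s$-string equation~\eqref{eq: s-string equation} are substituted into one another, the statement follows by a direct computation.
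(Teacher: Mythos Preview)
Your argument is correct; the core algebraic identity is exactly the one the paper uses in Section~\ref{section: existence}, but you run it in the opposite direction. The paper first takes $(v,w)$ to be the solution of the half of the Burgers--KdV hierarchy with initial condition~\eqref{eq: initial condition for half} (whose existence is secured by Proposition~\ref{proposition: commutativity} and Lemma~\ref{lemma:existence}) and then rewrites $\frac{2n+1}{2}R_n$ as the right-hand side of the open KdV equation, thereby identifying $v$ with $F^o$ and proving Theorems~\ref{theorem: existence} and~\ref{theorem: burgers} in one stroke. You instead treat Theorem~\ref{theorem: existence} as a black box and, by induction on $n$, substitute the $s$-string equation into the open KdV recursion to recover the defining recursion for $R_n$. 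The paper's route needs no induction (since $\frac{\d v}{\d t_{n-1}}=R_{n-1}$ is already part of the hypothesis) and delivers the existence half of Theorem~\ref{theorem: existence} for free; your route is more self-contained once Theorem~\ref{theorem: existence} is granted and sidesteps the commutativity analysis of Section~\ref{subsection: commutativity with the first}--\ref{subsection: all flows}.
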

Here we again identify $x$ with $t_0$.


\subsection{Commutativity of the flows}

We are going to prove the following proposition.

\begin{proposition}\label{proposition: commutativity}
All operators $V_{R_i,\d_x K_i}$ and $V_{Q_i,0}$ commute with each other.
\end{proposition}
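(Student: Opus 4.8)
The goal is to show that the operators $V_{R_i,\d_x K_i}$ and $V_{Q_i,0}$ pairwise commute in the Lie algebra $\mcE_{v,w}$. Using the bracket formula recalled in the excerpt, commutativity of $V_{R_i,\d_x K_i}$ and $V_{R_j,\d_x K_j}$ is equivalent to the two identities
\begin{align}
&V_{R_i,\d_x K_i}(\d_x K_j)=V_{R_j,\d_x K_j}(\d_x K_i),\notag\\
&V_{R_i,\d_x K_i}R_j=V_{R_j,\d_x K_j}R_i,\notag
\end{align}
and similarly for the mixed and the $Q$-$Q$ cases. The first plan is therefore to split the proposition into three groups of identities (KdV--KdV, $t$--$t$, $t$--$s$, $s$--$s$) and treat the $w$-component separately from the $v$-component. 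Since $w$ evolves only under the $V_{R_i,\d_x K_i}=V_{\ldots,\d_x K_i}$ flows and is annihilated by $V_{Q_j,0}$, the $w$-components of all the required identities reduce to statements purely about $K_n$: namely that the standard KdV flows $\d_x K_i$ commute. This is exactly the classical integrability of the KdV hierarchy, which I would invoke as known (it is implicit in Section~\ref{subsection:kdv hierarchy} and standard, cf. \cite{MJD00}), so the $w$-part costs nothing.

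The substance is in the $v$-components. First I would introduce the linear operator
$$
\cD:=\frac{1}{2}\d_x^2+v_x\d_x+\frac{v_x^2+v_{xx}}{2}+w,
$$
in terms of which the recursions read $R_n=\frac{2u^2}{2n+1}\left(\cD R_{n-1}+\frac{1}{2}v_x K_{n-1}+\frac{3}{4}\d_x K_{n-1}\right)$ and $Q_n=\frac{u^2}{n+1}\cD Q_{n-1}$. The central technical step is to understand how the evolutionary vector fields interact with $\cD$. I would compute the commutators of $V_{R_i,\d_x K_i}$ and of $V_{Q_j,0}$ with the operator $\cD$, i.e. expressions of the form $[V_{R_i,\d_x K_i},\cD]$ acting on differential polynomials, since $\cD$ depends on $v$ and $w$ and is therefore moved by the flows. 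Because $\cD$ is quadratic in $v_x$, the key computation is $V_P(\cD g)=\cD(V_P g)+(\text{commutator term})$, where the commutator term is itself expressible through $\d_x P$, $v_x$, and lower data. With these commutation rules in hand, the proof of each pairwise identity becomes an induction on the total index $i+j$: one applies the recursion to the higher-index polynomial, pushes the vector field inside $\cD$ using the commutation rule, and matches terms against the symmetric expression, invoking the inductive hypothesis and the already-established KdV commutativity for the $K$-terms.

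\textbf{Main obstacle.} The hard part will be the base cases and the bookkeeping of the inhomogeneous source terms $\frac{1}{2}v_x K_{n-1}+\frac{3}{4}\d_x K_{n-1}$ in the $R_n$ recursion: unlike the purely KdV part, these couple the $v$-evolution to the $w$-evolution, so when I commute two $R$-flows I must verify that the cross terms (the variation of $K_{n-1}$ under the $w$-flow of the other vector field, plus the variation of $v_x$) cancel precisely against each other. Establishing the right commutation identity $[V_{R_i,\d_x K_i},\cD]$ cleanly, so that this cancellation is manifest rather than a miraculous coincidence of coefficients, is where the real work lies; once that identity is correctly formulated, the inductions should run mechanically. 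The $Q$-$Q$ commutativity, by contrast, I expect to be the easiest, since $Q_n$ involves only $\cD$ with no $K$-source and the $w$-flow is trivial, so it reduces to showing $V_{Q_i,0}(\cD Q_j)$ is symmetric in $i,j$, again by induction using the same commutation rule for $\cD$ specialized to the $s$-directions.
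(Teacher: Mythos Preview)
Your plan is a reasonable direct attack, but it diverges substantially from the paper's argument, and the paper's route sidesteps exactly the ``main obstacle'' you identify.

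The paper does \emph{not} run a double induction on $(i,j)$ for all pairs. Instead it proceeds in three steps. First, it observes that setting $w_*=0$ collapses $R_i,Q_i$ to the flows $\tR_i,\tQ_i$ of the ordinary Burgers hierarchy, whose commutativity is classical; this shows that for any pair $V_1,V_2$ in the list, the commutator $[V_1,V_2]=V_{T_1,T_2}$ has $T_2=0$ (pure KdV, as you also note) and $T_1|_{w_*=0}=0$. Second, it checks \emph{only} the commutators $[V_{Q_0,0},V_{R_n,\d_xK_n}]$ and $[V_{Q_0,0},V_{Q_n,0}]$ by hand --- essentially your $\cD$-commutation computation, but in the single case where one index is the simplest possible. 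Third, and this is the key idea you are missing, it proves a rigidity lemma: if $T\in\mcA_{v,w}$ satisfies $T|_{w_*=0}=0$ and $[V_{Q_0,0},V_{T,0}]=0$, then $T=0$. Since the Jacobi identity gives $[V_{Q_0,0},[V_1,V_2]]=0$, this forces $T_1=0$.

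What this buys: the paper never has to match the cross terms $\frac{1}{2}v_xK_{n-1}+\frac{3}{4}\d_xK_{n-1}$ against each other for two general indices $i,j$, which is precisely the bookkeeping you flag as the hard part. Your approach is in principle workable (and the paper even uses your $\cD$-pushing technique, but only against $Q_0$), yet carrying it out for arbitrary $i,j$ would be considerably heavier. The rigidity lemma is short --- a grading argument on the $w$-degree and $w$-weight of $T$ --- and turns an infinite family of identities into one. You might also note that the paper later gives an even shorter alternative via the Lax form $L=\d_x^2+2w$ and $\phi=e^v$, where the whole system becomes $\frac{\d\phi}{\d t_n}\propto(L^{n+1/2})_+\phi$ and commutativity is a two-line pseudo-differential operator calculation.
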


The proof of the proposition will occupy Sections~\ref{subsection: Burgers}-\ref{subsection: all flows}. The plan is the following. First, we consider the differential polynomials~$\tR_i:=\left.R_i\right|_{w_j=0},\tQ_i:=\left.Q_i\right|_{w_j=0}\in\mcA_{v}$ and show that the operators~$V_{\tR_i},V_{\tQ_i}\in\mcE_v$ pairwise commute. We do it in Section~\ref{subsection: Burgers}. Then in Section~\ref{subsection: commutativity with the first} we prove that the operators $V_{R_i,\d_x K_i}$ and $V_{Q_i,0}$ commute with $V_{Q_0,0}$. Finally, in Section~\ref{subsection: all flows} we deduce that all operators $V_{R_i,\d_x K_i}$ and $V_{Q_i,0}$ commute with each other.


\subsection{Burgers hierarchy}\label{subsection: Burgers}

Define an operator $B$ by $B:=\d_x+v_x$. It is easy to see that
$$
\frac{1}{2}B^2=\frac{1}{2}\d_x^2+v_x\d_x+\frac{v_x^2+v_{xx}}{2}.
$$
Therefore, we have
\begin{gather*}
\tR_i=\frac{u^{2i}}{(2i+1)!!}B^{2i}v_x,\qquad \tQ_i=\frac{u^{2i+1}}{2^i(i+1)!}B^{2i}\frac{v_x^2+v_{xx}}{2}.
\end{gather*}
We can easily recognize here the differential polynomials that describe the flows of the Burgers hierarchy (see~e.g.~\cite{Olv86}), up to multiplication by a constant. The fact that the operators~$V_{\tR_i},V_{\tQ_i}\in\mcE_v$ commute with each other is well-known (see~e.g.~\cite{Olv86}).


\subsection{Commutators $[V_{Q_0,0},V_{R_n,\d_x K_n}]$ and $[V_{Q_0,0},V_{Q_n,0}]$}\label{subsection: commutativity with the first}

Let
\begin{gather*}
P:=\frac{v_x^2+v_{xx}}{2},\qquad P_*:=\sum\frac{\d P}{\d v_i}\d_x^i=\frac{1}{2}\d_x^2+v_x\d_x.
\end{gather*}
The following formulas will be very useful for us:
\begin{gather*}
V_{Q_0,0}\cdot B=u[P_*,B]+uw_x,\qquad \frac{1}{2}B^2=P_*+P.
\end{gather*}

Let us prove that $[V_{Q_0,0},V_{Q_n,0}]=0$ or, equivalently, $V_{Q_0,0}Q_n-V_{Q_n,0}Q_0=0$. We have
\begin{align*}
&(n+1)u^{-2}\left(V_{Q_0,0}Q_n-V_{Q_n,0}Q_0\right)=V_{Q_0,0}\left(\left(\frac{1}{2}B^2+w\right)Q_{n-1}\right)-u P_*\left(\left(\frac{1}{2}B^2+w\right)Q_{n-1}\right)=\\
=&\frac{u}{2}[P_*,B^2]Q_{n-1}+\frac{u}{2}(w_xB+B\circ w_x)Q_{n-1}+\left(\frac{1}{2}B^2+w\right)\left(V_{Q_0,0}Q_{n-1}\right)\\
&-\frac{u}{2}(P_*\circ B^2)Q_{n-1}-u\left(P_*w\right)Q_{n-1}-uw_x\d_xQ_{n-1}-uwP_*Q_{n-1}=\\
=&\left(\frac{1}{2}B^2+w\right)\left(V_{Q_0,0}Q_{n-1}-V_{Q_{n-1},0}Q_0\right).
\end{align*}
Continuing in the same way we get 
$$
V_{Q_0,0}Q_n-V_{Q_n,0}Q_0=\frac{u^{2n}}{(n+1)!}\left(\frac{1}{2}B^2+w\right)^{n}\left(V_{Q_0,0}Q_{0}-V_{Q_{0},0}Q_0\right)=0.
$$

Let us prove that $[V_{Q_0,0},V_{R_n,\d_x K_n}]=0$. Since $K_n$ doesn't depend on $v_i$, we have $V_{Q_0,0}\d_x K_n=0$. Thus, it remains to prove that $V_{Q_0,0}R_n-V_{R_n,\d_x K_n}Q_0=0$. We proceed by induction on $n$. So assume that $V_{Q_0,0}R_{n-1}-V_{R_{n-1},\d_x K_{n-1}}Q_0=0$. 

We have
\begin{align*}
u^{-3}\frac{2n+1}{2}V_{Q_0,0}R_n=&u^{-1}V_{Q_0,0}\left[\left(\frac{1}{2}B^2+w\right)R_{n-1}+\frac{1}{2}v_x K_{n-1}+\frac{3}{4}\d_x K_{n-1}\right]\\
=&\frac{1}{2}\left([P_*,B^2]+w_xB+B\circ w_x\right)R_{n-1}+u^{-1}\left(\frac{1}{2}B^2+w\right)\left(V_{Q_0,0}R_{n-1}\right)\\
&+\frac{1}{2}(\d_x P+w_x)K_{n-1}=\\
=&\frac{1}{2}[P_*,B^2]R_{n-1}+w_xBR_{n-1}+\frac{1}{2}w_{xx}R_{n-1}\\
&+\left(\frac{1}{2}B^2+w\right)(P_*R_{n-1}+\d_x K_{n-1})+\frac{1}{2}(\d_x P+w_x)K_{n-1}=\\
=&P_*\left(\left(\frac{1}{2}B^2+w\right)R_{n-1}\right)+\frac{1}{2}B^2\left(\d_x K_{n-1}\right)+w\d_x K_{n-1}+\frac{1}{2}(\d_x P+w_x)K_{n-1}.
\end{align*}
On the other hand, we have
$$
u^{-3}\frac{2n+1}{2}V_{R_n,\d_xK_n}Q_0=P_*\left[\left(\frac{1}{2}B^2+w\right)R_{n-1}+\frac{1}{2}v_xK_{n-1}+\frac{3}{4}\d_xK_{n-1}\right]+\frac{2n+1}{2}u^{-2}\d_x K_n.
$$
Therefore, we get
\begin{multline*}
u^{-3}\frac{2n+1}{2}V_{Q_0,0}R_n-u^{-3}\frac{2n+1}{2}V_{R_n,\d_xK_n}Q_0=\\
=w\d_xK_{n-1}+\frac{1}{2}w_xK_{n-1}+\frac{1}{8}\d_x^3 K_{n-1}-\frac{2n+1}{2}u^{-2}\d_x K_n=0.
\end{multline*}
This completes our proof.


\subsection{All commutators}\label{subsection: all flows}

Let $V_1$ and $V_2$ be any two operators from the set $\{V_{R_i,\d_xK_i}\}_{i\ge 1}\cup\{V_{Q_i,0}\}_{i\ge 0}$. We have to prove that $[V_1,V_2]=0$. We begin with the following lemma.
\begin{lemma}
Let $[V_1,V_2]=V_{T_1,T_2}$. Then $T_2=0$ and $\left.T_1\right|_{w_*=0}=0$.
\end{lemma}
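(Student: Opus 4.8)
The plan is to unwind the Lie-bracket formula from Section~\ref{section:evolutionary pde} and then treat the two components of the resulting operator separately. Write $V_1=V_{P^1,P^2}$ and $V_2=V_{S^1,S^2}$, where each of $P^2,S^2$ is either $0$ or of the form $\d_x K_i$, and each of $P^1,S^1$ belongs to $\{R_i\}\cup\{Q_i\}$. Then $[V_1,V_2]=V_{T_1,T_2}$ with $T_1=V_1S^1-V_2P^1$ and $T_2=V_1S^2-V_2P^2$. To see that $T_2=0$, note that $S^2$ and $P^2$ are differential polynomials in $w$ alone, so the action of $V_1$ on $S^2$ uses only the $w$-component $P^2$ of $V_1$ (and similarly for $V_2$). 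If either operator is of the form $V_{Q_i,0}$ its $w$-component vanishes and the corresponding term disappears, using that $K_i\in\mcA_w$; the only remaining case is $V_1=V_{R_a,\d_x K_a}$, $V_2=V_{R_b,\d_x K_b}$, where $T_2$ is exactly the $w$-component of the commutator of the $a$-th and $b$-th flows of the KdV hierarchy~\eqref{eq: kdv hierarchy}, and hence vanishes by the well-known commutativity of those flows.

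For the statement about $T_1$, the key is to understand how the restriction $w_*=0$ interacts with $\d_x$. Writing $\d_x=\overline{\d}_x+\sum_{s\ge0}w_{s+1}\frac{\d}{\d w_s}$ with $\overline{\d}_x=\sum_{s\ge0}v_{s+1}\frac{\d}{\d v_s}$, every monomial coming from the second sum carries an explicit factor $w_{s+1}$ and therefore dies after setting $w_*=0$. This yields two facts that I would record first: for any $G\in\mcA_{v,w}$ one has $(\d_x G)|_{w_*=0}=\overline{\d}_x(G|_{w_*=0})$, and for any $G\in\mcA_w$ one has $(\d_x G)|_{w_*=0}=0$, since in this latter case $\d_x G$ lies entirely in the second sum.

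With these facts in hand I would expand $T_1=V_1S^1-V_2P^1$ into its $v$-derivative part and its $w$-derivative part and restrict to $w_*=0$. In the $w$-derivative part the coefficients are $(\d_x^jP^2)|_{w_*=0}$ and $(\d_x^jS^2)|_{w_*=0}$; since each of $P^2,S^2$ is either $0$ or $\d_x K_i$, every such coefficient is the restriction of $\d_x$ applied to a differential polynomial in $w$ alone, hence vanishes by the second fact above, so the whole $w$-derivative part drops out. Applying the first fact repeatedly to the $v$-derivative part, what survives is precisely $V_{\overline{P}}\,\overline{S}-V_{\overline{S}}\,\overline{P}$, where $\overline{P}=P^1|_{w_*=0}$ and $\overline{S}=S^1|_{w_*=0}$ belong to $\{\tR_i\}\cup\{\tQ_i\}$. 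This is exactly the differential polynomial $C$ for which $[V_{\overline{P}},V_{\overline{S}}]=V_C$ in $\mcE_v$, and it vanishes by the commutativity of the Burgers operators proved in Section~\ref{subsection: Burgers}. Therefore $T_1|_{w_*=0}=0$.

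The main obstacle is exactly the $w$-derivative (cross) part, namely the terms in which one operator differentiates the $w$-dependence of the other operator's first component; these are nonzero before restriction and would otherwise spoil the argument. The computation closes only because the $w$-components of all the flows in question are total $\d_x$-derivatives of differential polynomials in $w$, so that they, together with all their further $\d_x$-derivatives, vanish identically on $w_*=0$. Once this is isolated, everything reduces to the already-established commutativity of the Burgers flows and of the KdV flows.
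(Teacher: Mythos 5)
Your proof is correct and takes essentially the same route as the paper's: $T_2=0$ is reduced (after noting that the $K_i$ are independent of $v_*$) to the known commutativity of the KdV flows, and $\left.T_1\right|_{w_*=0}=0$ is reduced to the commutativity of the Burgers operators $V_{\tR_i},V_{\tQ_i}$ established in Section~\ref{subsection: Burgers}. The only difference is expository: you justify via the two restriction identities for $\d_x$ the identities such as $\left.\left(V_{R_i,\d_x K_i}R_j\right)\right|_{w_*=0}=V_{\tR_i}\tR_j$, which the paper records as obvious.
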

\begin{proof}
Let us prove that $T_2=0$. Clearly, we have to do it, only if $V_1,V_2\in \{V_{R_i,\d_xK_i}\}_{i\ge 1}$. Let $V_1=V_{R_i,\d_xK_i}$ and $V_2=V_{R_j,\d_x K_j}$. Since the differential polynomials~$K_l$ don't depend on $v_*$, we have 
$$
T_2=V_{R_i,\d_x K_i}\d_x K_j-V_{R_j,\d_x K_j}\d_x K_i=V_{\d_x K_i}\d_x K_j-V_{\d_x K_j}\d_x K_i.
$$
Here we consider the operators $V_{\d_x K_i}$ and $V_{\d_x K_j}$ as elements of the space $\mcE_w$. The last expression is equal to $0$, because the differential polynomials~$\d_x K_l$ describe the flows of the KdV hierarchy~(see~e.g.~\cite{Olv86}).

Let us prove that $\left.T_1\right|_{w_*=0}=0$. Let $\widetilde R_i:=\left.R_i\right|_{w_*=0}$ and $\widetilde Q_i:=\left.Q_i\right|_{w_*=0}$. We consider the operators $V_{\widetilde R_i}$ and $V_{\widetilde Q_i}$ as elements of $\mcE_v$. Obvioulsy, we have 
$$
\left(\left.V_{Q_i,0}R_j\right)\right|_{w_*=0}=V_{\widetilde Q_i}\widetilde R_j\text{ and }\left(\left.V_{Q_i,0}Q_j\right)\right|_{w_*=0}=V_{\widetilde Q_i}\widetilde Q_j.
$$
Since $\left.\d_x K_i\right|_{w_*=0}=0$, we get
$$
\left(\left.V_{R_i,\d_x K_i}R_j\right)\right|_{w_*=0}=V_{\widetilde R_i}\widetilde R_j\text{ and }\left(\left.V_{R_i,\d_x K_i}Q_j\right)\right|_{w_*=0}=V_{\widetilde R_i}\widetilde Q_j.
$$
In Section~\ref{subsection: Burgers} we showed that the operators $V_{\tR_i}$ and $V_{\tQ_j}$ pairwise commute. Thus, $\left.T_1\right|_{w_*=0}=0$. The lemma is proved.
\end{proof}

From the Jacobi identity it follows that $[V_{Q_0,0},[V_1,V_2]]=0$. We have proved that $[V_1,V_2]=V_{T_1,0}$, where $\left.T_1\right|_{w_*=0}$. The following lemma obviously completes the proof of Proposition~\ref{proposition: commutativity}.

\begin{lemma}
Suppose $T\in\mcA_{v,w}$ is an arbitrary differential polynomial such that $\left.T\right|_{w_*=0}=0$ and $[V_{Q_0,0},V_{T,0}]=0$. Then $T=0$.
\begin{proof}
Before proving the lemma let us introduce several notations. A partition $\lambda$ is a sequence of non-negative integers $\lambda_1,\ldots,\lambda_r$ such that $\lambda_1\ge\lambda_2\ge\ldots\ge\lambda_r$. Note that our terminology is slightly non-standard, because we allow zeroes in $\lambda$. Let $l(\lambda):=r$ and $|\lambda|:=\sum_{i=1}^r\lambda_i$. The set of all partitions will be denoted by $\mcP$. For a partition $\lambda$ let $v_\lambda:=\prod_{i=1}^{l(\lambda)}v_{\lambda_i}$ and $w_\lambda:=\prod_{i=1}^{l(\lambda)}w_{\lambda_i}$. 

Consider any differential polynomial $Q\in\mcA_{v,w}$. Let $Q=\sum_{\lambda,\mu\in\mcP} d_{\lambda,\mu}v_\lambda w_\mu$, where $d_{\lambda,\mu}\in\mbC[u,u^{-1}]$. Let
\begin{align*}
&Gr_i Q:=\sum_{l(\mu)=i}d_{\lambda,\mu}v_\lambda w_\mu,\qquad Gr_i^j Q:=\sum_{\substack{l(\mu)=i\\|\mu|=j}}d_{\lambda,\mu}v_\lambda w_\mu.
\end{align*}

The equation $[V_{Q_0,0},V_{T,0}]=0$ means that $V_{Q_0,0}T-V_{T,0}Q_0=0$. Let $T=\sum_{\substack{\lambda,\mu\in\mcP\\l(\mu)\ge 1}}c_{\lambda,\mu}v_\lambda w_\mu$, where $c_{\lambda,\mu}\in\mbC[u,u^{-1}]$. We have
\begin{align}
&V_{T,0}Q_0=u\left(v_x\d_x T+\frac{1}{2}\d_x^2 T\right),\label{eq: tmp1}\\
&V_{Q_0,0} T=u\sum_{\substack{\lambda,\mu\in\mcP\\l(\mu)\ge 1}}\sum_{i\ge 0}c_{\lambda,\mu}\frac{\d v_\lambda}{\d v_i}(\d_x^i P+w_i)w_\mu.\label{eq: tmp2}
\end{align}

Suppose $T\ne 0$. Let $i_0$ be the minimal $i$ such that $Gr_i T\ne 0$. From the condition $T|_{w_*=0}=0$ it follows that $i_0\ge 1$. Let $j_0$ be the maximal $j$ such that $Gr_{i_0}^j T\ne 0$. From \eqref{eq: tmp1} it is easy to see that
$$
Gr^{j_0+2}_{i_0}\left(V_{T,0}Q_0\right)=\frac{u}{2}\sum_{\substack{\lambda,\mu\in\mcP\\l(\mu)=i_0,|\mu|=j_0}}c_{\lambda,\mu}v_\lambda\d_x^2(w_\mu)\ne 0.
$$
On the other hand, from \eqref{eq: tmp2} it obviously follows that $Gr_{i_0}^{j_0+2}(V_{Q_0,0}T)=0$. This contradiction proves the lemma.

\end{proof}

\end{lemma}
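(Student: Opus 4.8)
The plan is to convert the commutator hypothesis into a single scalar identity in $\mcA_{v,w}$ and then extract a contradiction from its top-degree part in a grading by the $w$-variables. First I would unwind the bracket: since both $V_{Q_0,0}$ and $V_{T,0}$ have vanishing $w$-component, the Lie algebra structure on $\mcE_{v,w}$ gives $[V_{Q_0,0},V_{T,0}]=V_{R,0}$ with $R=V_{Q_0,0}T-V_{T,0}Q_0$, and $V_{R,0}=0$ forces $R=0$ (apply the field to $v$). So the hypothesis is precisely $V_{Q_0,0}T=V_{T,0}Q_0$. Writing $Q_0=u\left(P+w\right)$ with $P=\frac{v_x^2+v_{xx}}{2}$ and using $\frac{\d P}{\d v_1}=v_x$, $\frac{\d P}{\d v_2}=\frac12$, the two sides read
\[
V_{T,0}Q_0=u\left(v_x\d_x T+\tfrac12\d_x^2 T\right),\qquad V_{Q_0,0}T=u\sum_{i\ge 0}\left(\d_x^i P+w_i\right)\frac{\d T}{\d v_i}.
\]

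Next I would grade by the $w$-content. For $\lambda,\mu\in\mcP$ set $v_\lambda=\prod_k v_{\lambda_k}$ and $w_\mu=\prod_k w_{\mu_k}$, decompose $T=\sum_{\lambda,\mu}c_{\lambda,\mu}v_\lambda w_\mu$, and record for each monomial the number $l(\mu)$ of $w$-factors and their total weight $|\mu|$. The point is that the two sides behave differently under this bigrading: multiplication by $v_x$ and the action of $\d_x$ on the $v$-part keep $l(\mu)$ fixed and do not raise $|\mu|$, while $\d_x$ applied to the $w$-part keeps $l(\mu)$ fixed and raises $|\mu|$ by one. Hence on the left the only contribution of $w$-count $i$ with $|\mu|$ raised by two comes from $\frac{u}{2}\d_x^2$ acting purely on $w_\mu$; on the right the $\d_x^iP$ terms carry no $w$ and so raise neither index, whereas the $w_i$ terms raise the $w$-count to $l(\mu)+1$. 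In particular, at a fixed $w$-count the right-hand side never raises the weight at all.

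Assuming $T\ne0$, the hypothesis $T|_{w_*=0}=0$ forces the minimal $w$-count $i_0$ to satisfy $i_0\ge 1$; let $j_0$ be the maximal weight occurring at count $i_0$. Comparing the components of bidegree $(i_0,j_0+2)$ in $V_{Q_0,0}T=V_{T,0}Q_0$, the right-hand side contributes nothing while the left contributes $\frac{u}{2}\sum_{l(\mu)=i_0,\,|\mu|=j_0}c_{\lambda,\mu}\,v_\lambda\,\d_x^2 w_\mu$. I expect the one genuine point to verify to be that this surviving sum is nonzero, and this is where the argument really turns: because $\ker\d_x$ inside $\mcA_{v,w}$ is exactly the constants $\mbC[u,u^{-1}]$, the operator $\d_x$---and hence $\d_x^2$---is injective on the span of $w$-monomials of positive $w$-count; together with the linear independence of the distinct $v_\lambda$ this shows the leading term does not vanish, contradicting the identity and forcing $T=0$. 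Thus the main obstacle is not any lengthy computation but rather choosing the bigrading so that the two sides of $V_{Q_0,0}T=V_{T,0}Q_0$ have manifestly incompatible top terms, and confirming the injectivity of $\d_x$ that keeps the surviving term alive.
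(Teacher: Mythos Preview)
Your argument is essentially identical to the paper's: the same bigrading by $(l(\mu),|\mu|)$, the same extremal choice of $(i_0,j_0)$, and the same comparison at bidegree $(i_0,j_0+2)$; you even add the helpful remark that injectivity of $\d_x$ on non-constants is what guarantees the surviving term is nonzero. One small slip: in your final paragraph you interchange ``left'' and ``right'' relative to the equation $V_{Q_0,0}T=V_{T,0}Q_0$ --- it is $V_{T,0}Q_0$ that produces the $\frac{u}{2}\sum c_{\lambda,\mu}v_\lambda\,\d_x^2 w_\mu$ term and $V_{Q_0,0}T$ that vanishes at $(i_0,j_0+2)$ --- but the contradiction is of course unaffected.
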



\subsection{String solution}

In this section we construct a specific solution of the half of the Burgers-KdV hierarchy that satisfies the open string equation.

\begin{proposition}\label{proposition: string solution}
Consider the half of the Burgers-KdV hierarchy. Let us specify the following initial data for the hierarchy:
\begin{gather}\label{eq: initial condition for half}
\left.v\right|_{t_{\ge 1}=0,s=0}=0 \text{ and } \left.w\right|_{t_{\ge 1}=0,s=0}=u^{-2}x.
\end{gather}
Then the solution of the hierarchy satisfies the open string equation
$$
\frac{\d v}{\d t_0}=\sum_{n\ge 0}t_{n+1}\frac{\d v}{\d t_n}+u^{-1}s.
$$
\end{proposition}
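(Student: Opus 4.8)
The plan is to reformulate the open string equation as the single scalar identity $Sv=u^{-1}s$, where $S:=\frac{\d}{\d t_0}-\sum_{i\ge 0}t_{i+1}\frac{\d}{\d t_i}$ and $t_0$ is identified with $x$, so that $\frac{\d}{\d t_0}=\d_x$ and $\frac{\d v}{\d t_0}=R_0=v_x$ on the solution. Setting $\phi:=Sv-u^{-1}s$, I will show that $\phi$ solves a \emph{homogeneous linear} evolutionary system in the times $t_1,t_2,\dots,s$, and that $\phi$ vanishes at $t_{\ge 1}=s=0$; uniqueness for such systems then forces $\phi\equiv 0$. The initial value is immediate: at $t_{\ge 1}=s=0$ the data~\eqref{eq: initial condition for half} give $v=0$, each term $t_{i+1}\frac{\d v}{\d t_i}$ carries a vanishing factor, and $u^{-1}s=0$, so $\phi|_{t_{\ge 1}=s=0}=0$.

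The elementary input is the set of commutators of $S$ with the flows. A direct computation gives $[\d_x,S]=0$, $[\frac{\d}{\d t_n},S]=-\frac{\d}{\d t_{n-1}}$ for $n\ge 1$, and $[\frac{\d}{\d s},S]=0$. In particular $S$ commutes with $\d_x$, so $S(v_i)=\d_x^i\psi$ where $\psi:=Sv$. I also use the closed string equation $Sw=u^{-2}$ recorded above, which gives $S(w_0)=u^{-2}$ and $S(w_i)=0$ for $i\ge 1$. For the $s$-flow this already suffices: since $Q_0=u(P+w)$ and $SP=v_x\d_x\psi+\frac12\d_x^2\psi=P_*\psi$, I obtain $\frac{\d\psi}{\d s}=SQ_0=uP_*\psi+u^{-1}$, and because $P_*$ annihilates $u^{-1}s$ this rearranges to $\frac{\d\phi}{\d s}=uP_*\phi$.

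For the $t_n$-flows the commutator relation gives $\frac{\d\psi}{\d t_n}=SR_n-R_{n-1}$ (with $R_0=v_x$). Expanding $SR_n=\sum_i\frac{\d R_n}{\d v_i}\d_x^i\psi+\frac{\d R_n}{\d w_0}u^{-2}$, the whole matter reduces to the key differential-polynomial identity
\begin{gather*}
\frac{\d R_n}{\d w_0}=u^2R_{n-1},\qquad n\ge 1,
\end{gather*}
together with its KdV counterpart $\frac{\d K_n}{\d w_0}=u^2K_{n-1}$, which follows from $Sw=u^{-2}$. Granting these, $SR_n=\bigl(\sum_i\frac{\d R_n}{\d v_i}\d_x^i\bigr)\psi+R_{n-1}$, hence $\frac{\d\phi}{\d t_n}=\frac{\d\psi}{\d t_n}=\bigl(\sum_i\frac{\d R_n}{\d v_i}\d_x^i\bigr)\psi$; and since $R_n$ involves only $v_x,v_{xx},\dots$ and not $v_0=v$, the operator $\sum_i\frac{\d R_n}{\d v_i}\d_x^i$ kills $u^{-1}s$, so in fact $\frac{\d\phi}{\d t_n}=\bigl(\sum_i\frac{\d R_n}{\d v_i}\d_x^i\bigr)\phi$.

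I expect the identity $\frac{\d R_n}{\d w_0}=u^2R_{n-1}$ to be the main obstacle, and I would prove it by induction on $n$ directly from the defining recursion. The two facts that make the induction close are that $\frac{\d}{\d w_0}$ commutes with $\d_x$ on differential polynomials, and that differentiating the operator $\frac12\d_x^2+v_x\d_x+\frac{v_x^2+v_{xx}}{2}+w$ in $w_0$ produces exactly one extra copy of its argument; substituting $\frac{\d R_{n-1}}{\d w_0}=u^2R_{n-2}$ and $\frac{\d K_{n-1}}{\d w_0}=u^2K_{n-2}$ and collapsing the bracket by the recursion for $R_{n-1}$ yields the normalization $\frac{2n-1}{2}+1=\frac{2n+1}{2}$ that reproduces $u^2R_{n-1}$, the case $n=1$ being a one-line check. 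Once the two homogeneous linear systems $\frac{\d\phi}{\d t_n}=\bigl(\sum_i\frac{\d R_n}{\d v_i}\d_x^i\bigr)\phi$ and $\frac{\d\phi}{\d s}=uP_*\phi$ are in hand, uniqueness finishes the argument: writing $\phi$ as a power series in $t_1,t_2,\dots,s$ with $x$-dependent coefficients, each flow lowers the total degree in these times, so every Taylor coefficient is determined by $\phi|_{t_{\ge 1}=s=0}=0$ and hence vanishes. Thus $\phi\equiv 0$, which is exactly the open string equation.
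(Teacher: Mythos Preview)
Your argument is correct and follows essentially the same route as the paper: introduce $S=\frac{\d}{\d t_0}-\sum_{i\ge 0}t_{i+1}\frac{\d}{\d t_i}$, derive an evolution system for $Sv$ along the flows using $Sw=u^{-2}$ and the recursion for $R_n$, and invoke uniqueness. The only noteworthy difference is that you also treat the $s$-flow, obtaining a homogeneous linear system for $\phi=Sv-u^{-1}s$ with the trivial initial datum $\phi|_{t_{\ge 1}=s=0}=0$; the paper instead works only with the $t_n$-flows and must first compute $v|_{t_{\ge 1}=0}=u^{-1}\bigl(\tfrac{s^3}{6}+t_0 s\bigr)$ to obtain $(Sv)|_{t_{\ge 1}=0}=u^{-1}s$. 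Your packaging of the main computation as the single differential-polynomial identity $\frac{\d R_n}{\d w_0}=u^2R_{n-1}$ (together with the classical $\frac{\d K_n}{\d w_0}=u^2K_{n-1}$) is exactly what the paper's explicit expansion of $S\frac{\d v}{\d t_n}$ is doing, written more invariantly.
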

We remind the reader that we identify $x$ with $t_0$.
\begin{proof}
Recall that $S:=\frac{\d}{\d t_0}-\sum_{n\ge 0}t_{n+1}\frac{\d}{\d t_n}$. We have to prove that $Sv=u^{-1}s$. We have $Sw=u^{-2}$, $w=\frac{\d^2 F}{\d t_0^2}$ and $K_n=\frac{\d^2 F}{\d t_0\d t_n}$ (see Section~\ref{subsection:kdv hierarchy}), therefore, $S K_n=K_{n-1}$, for $n\ge 1$. Let us put $K_{-1}:=u^{-2}$, so the last equation is also valid for $n=0$.

It is easy to see that $v|_{t_{\ge 1}=0}=u^{-1}\frac{s^3}{6}+u^{-1}t_0s$. Hence, 
\begin{gather}\label{eq: s-initial condition}
(Sv)|_{t_{\ge 1}=0}=u^{-1}s.
\end{gather}
For $n\ge 1$, we have
\begin{align*}
\frac{\d}{\d t_n}(Sv)=&S\frac{\d v}{\d t_n}-\frac{\d v}{\d t_{n-1}}=\\
=&\frac{2u^2}{2n+1}\left[\left((Sv)_x\d_x+(Sv)_x v_x+\frac{1}{2}(Sv)_{xx}+u^{-2}\right)\frac{\d v}{\d t_{n-1}}+\left(\frac{1}{2}B^2+w\right)S\frac{\d v}{\d t_{n-1}}+\right.\\
&+\left.\frac{1}{2}(Sv)_xK_{n-1}+\frac{1}{2}v_xK_{n-2}+\frac{3}{4}\d_x K_{n-2}\right]-\frac{\d v}{\d t_{n-1}}=\\
=&\frac{2u^2}{2n+1}\left[\left((Sv)_x\d_x+(Sv)_x v_x+\frac{1}{2}(Sv)_{xx}\right)\frac{\d v}{\d t_{n-1}}+\right.\\
&\left.+\left(\frac{1}{2}B^2+w\right)\frac{\d }{\d t_{n-1}}(Sv)+\frac{1}{2}(Sv)_xK_{n-1}\right].
\end{align*}
This system together with the initial condition \eqref{eq: s-initial condition} uniquely determines the power series $Sv$. It is easy to see that $Sv=u^{-1}s$ satisfies the system. Proposition~\ref{proposition: string solution} is proved.
\end{proof}

\subsection{Relation to the wave function of the KdV hierarchy}\label{subsection:wave}

Consider the operator
$$
L:=\d_x^2+2w.
$$
Recall that the KdV hierarchy can be written in the so-called Lax form:
$$
\frac{\d}{\d t_n}L=\frac{u^{2n}}{(2n+1)!!}\left[(L^{n+\frac{1}{2}})_+,L\right],\quad n\ge 1.
$$ 
Here we use the language of pseudo-differential operators. We briefly review it in~\cite{Bur14} and we refer the reader to the book~\cite{Dic03} for a detailed introduction to this subject. Introduce variables $t_n$ with $n\in\frac{1}{2}+\Z_{\ge 0}$ and let $t_{\frac{1}{2}+k}=s_k$, $k\in\Z_{\ge 0}$. In \cite{Bur14} we prove that the Burgers-KdV hierarchy is equivalent to the following system of evolutionary PDEs for the functions $w$ and $\phi=e^v$:
\begin{align}
&\frac{\d}{\d t_n}L=\frac{u^{2n}}{(2n+1)!!}\left[(L^{n+\frac{1}{2}})_+,L\right],\quad n\in\frac{1}{2}\Z_{\ge 1};\label{eq:Lax KdV}\\
&\frac{\d}{\d t_n}\phi=\frac{u^{2n}}{(2n+1)!!}(L^{n+\frac{1}{2}})_+\phi,\quad n\in\frac{1}{2}\Z_{\ge 1}.\label{eq:equations for wave}
\end{align}
Equations~\eqref{eq:equations for wave} coincide with the equations for the wave function of the KdV hierarchy~(see e.g.~\cite{Dic03}). The commutativity of the flows of the system~\eqref{eq:Lax KdV}-\eqref{eq:equations for wave} is actually well-known (see e.g.~\cite{Dic03}) and the proof is simple. Let us recall it. Consider the ring~$\mcA_{\phi,w}$. Let $T_n:=\frac{u^{2n}}{(2n+1)!!}(L^{n+\frac{1}{2}})_+\phi\in\mcA_{\phi,w}$. We have to check that the operators 
$$
V_{T_n,\d_x K_n}\in\mcE_{\phi,w},\quad n\in\frac{1}{2}\Z_{\ge 1},
$$ 
pairwise commute. Here we, by definition, put $K_n=0$, for $n\in\frac{1}{2}+\Z_{\ge 0}$. Let $m,n\in\frac{1}{2}\Z_{\ge 1}$ and
$$
[V_{T_m,\d_x K_m},V_{T_n,\d_x K_n}]=V_{P_1,P_2}.
$$
The fact that $P_2=0$ follows from the commutativity of the flows of the KdV hierarchy. For $P_1$ we have
\begin{align*}
P_1=&V_{T_m,\d_xK_m}T_n-V_{T_n,\d_xK_n}T_m=\\
=&\frac{u^{2m+2n}}{(2m+1)!!(2n+1)!!}\times\\
&\times\left(\left[(L^{m+\frac{1}{2}})_+,L^{n+\frac{1}{2}}\right]_++(L^{n+\frac{1}{2}})_+(L^{m+\frac{1}{2}})_+-\left[(L^{n+\frac{1}{2}})_+,L^{m+\frac{1}{2}}\right]_+-(L^{m+\frac{1}{2}})_+(L^{n+\frac{1}{2}})_+\right)\phi=\\
=&\frac{u^{2m+2n}}{(2m+1)!!(2n+1)!!}\left(\left[(L^{m+\frac{1}{2}})_+,(L^{n+\frac{1}{2}})_-\right]_+-\left[(L^{n+\frac{1}{2}})_+,L^{m+\frac{1}{2}}\right]_+\right)\phi.
\end{align*}
Since
$$
\left[(L^{n+\frac{1}{2}})_+,L^{m+\frac{1}{2}}\right]_+=-\left[(L^{n+\frac{1}{2}})_-,L^{m+\frac{1}{2}}\right]_+=-\left[(L^{n+\frac{1}{2}})_-,(L^{m+\frac{1}{2}})_+\right]_+,
$$
we conclude that $P_1=0$. The commutativity of the flows of the Burgers-KdV hierarchy is proved.


\section{Proofs of Theorems \ref{theorem: existence} and \ref{theorem: burgers}}\label{section: existence}

The proof of Theorems~\ref{theorem: existence} and \ref{theorem: burgers} goes as follows. Let a pair $(v,w)$ be the solution of the half of the Burgers-KdV hierarchy with the initial condition~\eqref{eq: initial condition for half}. Let us show that the series $F^o=v$ is a solution of the open KdV equations~\eqref{eq: open kdv}. Indeed, we have
\begin{align*}
\frac{2n+1}{2}\frac{\d v}{\d t_n}=&u^2\left[\left(\frac{1}{2}\d_x^2+v_x\d_x+\frac{v_x^2+v_{xx}}{2}+w\right)\frac{\d v}{\d t_{n-1}}+\frac{1}{2}v_x K_{n-1}+\frac{3}{4}\d_x K_{n-1}\right]=\\
=&u^2\left(\frac{v_x^2+v_{xx}}{2}+w\right)\frac{\d v}{\d t_{n-1}}+u^2\frac{\d}{\d t_{n-1}}\left(\frac{v_x^2+v_{xx}}{2}+w\right)+\frac{u^2}{2}v_xK_{n-1}-\frac{u^2}{4}\d_x K_{n-1}=\\
=&u\frac{\d v}{\d s}\frac{\d v}{\d t_{n-1}}+u\frac{\d^2 v}{\d s\d t_{n-1}}+\frac{u^2}{2}v_xK_{n-1}-\frac{u^2}{4}\d_x K_{n-1}.
\end{align*}
It remains to note that $K_n=\frac{\d^2 F^c}{\d t_0\d t_n}$ and we see that Theorems~\ref{theorem: existence} and~\ref{theorem: burgers} are proved.


\section{Proof of Theorem~\ref{theorem: main}}\label{section: open virasoro}

Denote by $a_{i,j}$ the number $\frac{(2i+1)!!(2j+1)!!}{2^{i+j+2}}$. Let $\tau:=\exp(F^o+F^c)$ and $\tau^o:=\exp(F^o)$. In order to save some space we will use the subscript~$n$ for the partial derivative by $t_n$ and the subscript~$s$ for the partial derivative by $s$. The proof of the theorem is based on the following lemma. 

\begin{lemma}\label{lemma: main lemma}
We have\\
1. $\frac{\cL_0\tau}{\tau}-(uF^o_s+u\d_s)\frac{\cL_{-1}\tau}{\tau}=0$.\\
2. If $n\ge 0$, then
\begin{align}
\label{eq: recursion}
\frac{\cL_{n+1}\tau}{\tau}-(uF^o_s+u\d_s)\frac{\cL_n\tau}{\tau}=&\frac{u^2}{4}\frac{(2n+1)!!}{2^n}F^c_{0,n}+\frac{u^4}{4}\sum_0^{n-1}a_{i,n-1-i}F_{0,i}^cF^c_{0,n-1-i}\\
&+\frac{u^2}{2}\frac{(2n+1)!!}{2^{n+2}}F^o_nF^o_0+\frac{u^2}{2}\sum_0^{n-1}a_{i,n-1-i}F_i^o\left(\frac{u^2}{2}F^o_0\d_x-\frac{u^2}{4}\d_x^2\right)F^c_{n-1-i}\notag\\
&+\frac{u^2}{2}\frac{(2n+1)!!}{2^{n+2}}F^o_{0,n}+\frac{u^4}{4}\sum_0^{n-1}a_{i,n-1-i}F^o_{0,i}F^c_{0,n-1-i}\notag\\
&-\frac{u^{n+1}}{4}\frac{\d_s^{n+1}\tau^o}{\tau^o}.\notag
\end{align}
\end{lemma}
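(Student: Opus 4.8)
The plan is to prove both parts at once by recognizing the operator $uF^o_s+u\d_s$ as conjugated differentiation in $s$ and then peeling off the closed potential. Since $F^c$ is independent of $s$ we have $\d_s\tau=F^o_s\tau$, so $uF^o_s+u\d_s=u\tau^{-1}\circ\d_s\circ\tau$ as operators; hence $(uF^o_s+u\d_s)\frac{\cL_n\tau}{\tau}=\frac{u\d_s\cL_n\tau}{\tau}$, and the left-hand side of both statements equals $\frac{(\cL_{n+1}-u\d_s\circ\cL_n)\tau}{\tau}$. Computing this operator for $n\ge-1$, using $\d_s\circ s=s\d_s+1$ together with the fact that $L_n$ commutes with $\d_s$, the terms $u^{n+1}s\d_s^{n+2}$ cancel and one obtains the clean identity $\cL_{n+1}-u\d_s\circ\cL_n=L_{n+1}-uL_n\circ\d_s-\tfrac14 u^{n+1}\d_s^{n+1}$. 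The last summand contributes $-\tfrac{u^{n+1}}{4}\frac{\d_s^{n+1}\tau}{\tau}=-\tfrac{u^{n+1}}{4}\frac{\d_s^{n+1}\tau^o}{\tau^o}$ (again by $F^c_s=0$), which is precisely the final term of \eqref{eq: recursion}. So everything reduces to evaluating $\frac{L_{n+1}\tau}{\tau}-u\frac{L_n\d_s\tau}{\tau}$.

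Next I would factor $\tau=\tau^o\exp(F^c)$ and expand the action of the second-order operators $L_{n+1},L_n$ on this product by Leibniz. The purely closed contributions are $\frac{L_m\exp(F^c)}{\exp(F^c)}$, which vanish by the closed Virasoro equations \eqref{eq: Virasoro}; what remains is $\frac{L_{n+1}\tau^o}{\tau^o}-u\frac{L_n\d_s\tau^o}{\tau^o}$ plus the mixed $F^o$--$F^c$ cross terms produced by the second-order symbols $\tfrac{u^2}{2}\frac{(2i+1)!!(2m-2i-1)!!}{2^{m+1}}\d_i\d_{m-1-i}$, plus a constant correction governed by the $\delta_{n,0}$ and $\delta_{n,-1}$ pieces of $L_n$. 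Writing $g^o_m:=\frac{L_m\tau^o}{\tau^o}$ and noting $u\frac{L_n\d_s\tau^o}{\tau^o}=(uF^o_s+u\d_s)g^o_n$ (because $L_n$ commutes with $\d_s$), the core object becomes $g^o_{n+1}-(uF^o_s+u\d_s)g^o_n$.

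To evaluate the core object I would feed in the open KdV equations \eqref{eq: open kdv}, rewritten as $(uF^o_s+u\d_s)F^o_j=\tfrac{2j+3}{2}F^o_{j+1}-\tfrac{u^2}{2}F^o_0F^c_{0,j}+\tfrac{u^2}{4}F^c_{00,j}$, together with \eqref{eq: s-string equation} for $F^o_s$. The linear-in-$t$ part $\sum_i \frac{(2i+2n+1)!!}{2^{n+1}(2i-1)!!}(t_i-\delta_{i,1})F^o_{i+n}$ of $g^o_n$ is carried by $uF^o_s+u\d_s$ exactly onto the corresponding part of $g^o_{n+1}$, thanks to the identity $\tfrac{2i+2n+3}{2}\cdot\frac{(2i+2n+1)!!}{2^{n+1}(2i-1)!!}=\frac{(2i+2n+3)!!}{2^{n+2}(2i-1)!!}$, and the analogous identities tying $\frac{(2i+1)!!(2m-2i-1)!!}{2^{m+1}}$ to $a_{i,j}$ match the second-order and quadratic $F^o$ parts. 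Among what survives are precisely the leftover combinations $\bigl(\tfrac{u^2}{2}F^o_0\d_x-\tfrac{u^2}{4}\d_x^2\bigr)F^c_{n-1-i}$ weighted by $F^o_i$ that appear in \eqref{eq: recursion}, while the boundary term $\frac{(2n+1)!!}{2^{n+1}}F^o_n$ combined with \eqref{eq: s-string equation} supplies the $F^o$-quadratic pieces $F^o_nF^o_0$ and $F^o_{0,n}$.

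The remaining $t$-weighted sums still carrying $F^c$-derivatives I would eliminate by differentiating the closed identity $\frac{L_n\exp(F^c)}{\exp(F^c)}=0$ once or twice in $t_0$; this replaces each weighted sum by unweighted $F^c$-expressions, and the closed KdV equations \eqref{eq:tau function} then collapse them into the pure-$F^c$ terms $\tfrac{u^2}{4}\frac{(2n+1)!!}{2^n}F^c_{0,n}$ and $\tfrac{u^4}{4}\sum a_{i,n-1-i}F^c_{0,i}F^c_{0,n-1-i}$ of \eqref{eq: recursion}. The main obstacle is exactly this final bookkeeping: the operator $uF^o_s+u\d_s$ is not a derivation, so one must track the correction $M(ab)=(Ma)b+a(Mb)-uF^o_s\,ab$ and the commutator $[\d_i,\,uF^o_s+u\d_s]=uF^o_{s,i}$, showing that the spurious cubic-in-$F^o$ contributions cancel, all while the double-factorial weights are reconciled simultaneously against open KdV, closed KdV, and differentiated closed Virasoro. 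For the base case $n=-1$ (part 1) the operator reduction of the first paragraph already gives $\frac{\cL_0\tau}{\tau}-(uF^o_s+u\d_s)\frac{\cL_{-1}\tau}{\tau}=\frac{L_0\tau}{\tau}-u\frac{L_{-1}\d_s\tau}{\tau}-\tfrac14$, and this vanishes by a direct check using the open string equation \eqref{eq: open string} and the closed equations.
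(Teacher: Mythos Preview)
Your approach is correct and, after the first step, coincides with the paper's proof. The paper splits $\cL_m$ into $L_m^{(1)}+L_m^{(2)}+L_m^{(3)}$ and computes four blocks (A)--(D): the linear $F^o$ part, the quadratic $F^o$ part, the $F^o$--$F^c$ cross terms coming from Leibniz, and the $s$-derivative part. Your conjugation $uF^o_s+u\d_s=u\tau^{-1}\d_s\tau$ together with the operator identity $\cL_{n+1}-u\d_s\circ\cL_n=L_{n+1}-uL_n\d_s-\tfrac14 u^{n+1}\d_s^{n+1}$ is a cleaner, uniform way to dispose of block~(D) in one line (the paper simply asserts (D) is ``easy to compute''); it also handles part~1 and part~2 by the same mechanism rather than treating part~1 separately. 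After that, your Leibniz expansion of $\tau=\tau^o e^{F^c}$ reproduces (A), (B), (C) verbatim: the ``purely open'' piece $g^o_{n+1}-(uF^o_s+u\d_s)g^o_n$ is (A)$+$(B), and your ``mixed cross terms'' are exactly the paper's (C). The remaining bookkeeping you flag---feeding the open KdV relation to bump $F^o_j\mapsto F^o_{j+1}$, invoking closed Virasoro and its $t_0$-derivatives to collapse the $t$-weighted $F^c$-sums, and tracking the non-derivation correction $M(ab)=(Ma)b+a(Mb)-uF^o_s\,ab$---is precisely what the paper carries out via the cancellations it marks with $*$, $**$, $\bullet$, etc. Neither route avoids the detailed double-factorial accounting; your reorganization buys a tidier start but the substantive computation is the same.
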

\begin{proof}
Let us prove point {\it 1}. From the usual Virasoro equations~\eqref{eq: Virasoro} it follows that
\begin{align*}
&\frac{\cL_{-1}\tau}{\tau}=-\frac{\d F^o}{\d t_0}+\sum_{n\ge 0}t_{n+1}\frac{\d F^o}{\d t_n}+u^{-1}s,\\
&\frac{\cL_0\tau}{\tau}=-\frac{3}{2}\frac{\d F^o}{\d t_1}+\sum_{n\ge 0}\frac{2n+1}{2}t_n\frac{\d F^o}{\d t_n}+s\frac{\d F^o}{\d s}+\frac{3}{4}.
\end{align*}
Using the open KdV equations~\eqref{eq: open kdv} we get
\begin{multline*}
\frac{\cL_0\tau}{\tau}-(uF^o_s+u\d_s)\frac{\cL_{-1}\tau}{\tau}=\\
=-\frac{u^2}{2}F^o_0 F^c_{0,0}+\frac{u^2}{4}F^c_{0,0,0}+\frac{1}{2}t_0F^o_0+\sum_{n\ge 0}t_{n+1}\left(\frac{u^2}{2}F^o_0F^c_{0,n}-\frac{u^2}{4}F^c_{0,0,n}\right)+sF^o_s+\frac{3}{4}-(uF^o_s+u\d_s)u^{-1}s.
\end{multline*}
By the string equation~\eqref{eq: string equation}, the last expression is equal to zero.

Let us prove point {\it 2}. Let 
\begin{align*}
&L_n^{(1)}:=\sum_{i\ge 0}\frac{(2i+2n+1)!!}{2^{n+1}(2i-1)!!}(t_i-\delta_{i,1})\frac{\d}{\d t_{i+n}},\\
&L_n^{(2)}:=\frac{u^2}{2}\sum_{i=0}^{n-1}a_{i,n-1-i}\frac{\d^2}{\d t_i\d t_{n-1-i}},\\
&L_n^{(3)}:=u^n s\frac{\d^{n+1}}{\d s^{n+1}}+\frac{3n+3}{4}u^n\frac{\d^n}{\d s^n}.
\end{align*}
Using the Virasoro equations~\eqref{eq: Virasoro} we get
\begin{align}
&\frac{\cL_{n+1}\tau}{\tau}-(uF^o_s+u\d_s)\frac{\cL_n\tau}{\tau}=\notag\\
=&\frac{L^{(1)}_{n+1}\tau^o}{\tau^o}-(uF^o_s+u\d_s)\frac{L^{(1)}_n\tau^o}{\tau^o}\tag{A}\\
&+\frac{L^{(2)}_{n+1}\tau^o}{\tau^o}-(uF^o_s+u\d_s)\frac{L^{(2)}_n\tau^o}{\tau^o}\tag{B}\\
&+u^2\sum_0^n a_{i,n-i}F^c_i F^o_{n-i}-u^2(uF^o_s+u\d_s)\sum_0^{n-1} a_{i,n-1-i}F^c_i F^o_{n-1-i}\tag{C}\\
&+\frac{L^{(3)}_{n+1}\tau^o}{\tau^o}-(uF^o_s+u\d_s)\frac{L^{(3)}_n\tau^o}{\tau^o}\tag{D}.
\end{align}
Using the open KdV equations~\eqref{eq: open kdv} and the Virasoro equations~\eqref{eq: Virasoro} we can compute that
\begin{align*}
\text{(A)}=&\sum_0^\infty\frac{(2i+2n+1)!!}{2^{n+1}(2i-1)!!}(t_i-\delta_{i,1})\left(\frac{u^2}{2}F^o_0\d_x-\frac{u^2}{4}\d_x^2\right)F^c_{n+i}=\\
=&\underbrace{-\frac{u^2}{2}\frac{(2n+1)!!}{2^{n+1}}F^o_0F^c_n}_{*}-\frac{u^4}{4}\sum_0^{n-1}a_{i,n-1-i}F^o_0(\underbrace{F^c_{0,i,n-1-i}}_{**}+\underbrace{2F^c_{0,i}F^c_{n-1-i}}_{***})\\
&+\frac{u^2}{4}\frac{(2n+1)!!}{2^n}F^c_{0,n}+\frac{u^4}{8}\sum_0^{n-1}a_{i,n-1-i}(\underbrace{F^c_{0,0,i,n-1-i}}_{\bullet\bullet}+2F_{0,i}^cF^c_{0,n-1-i}\underbrace{+2F^c_{0,0,i}F^c_{n-1-i}}_{\bullet}).
\end{align*}
For expression (B) we have
\begin{align*}
\text{(B)}=&\frac{u^2}{2}\sum_0^n a_{i,n-i}(F^o_{i,n-i}+F^o_i F^o_{n-i})-\frac{u^2}{2}(uF^o_s+u\d_s)\sum_0^{n-1} a_{i,n-1-i}(F^o_{i,n-1-i}+F^o_i F^o_{n-1-i})=\\
=&\frac{u^2}{2}\frac{(2n+1)!!}{2^{n+2}}F^o_{0,n}+\frac{u^4}{4}\sum_0^{n-1}a_{i,n-1-i}F^o_{0,i}F^c_{0,n-1-i}+\frac{u^2}{2}\sum_0^{n-1}a_{i,n-1-i}\left(\underbrace{\frac{u^2}{2}F_0^o\d_x}_{**}\underbrace{-\frac{u^2}{4}\d_x^2}_{\bullet\bullet}\right)F^c_{i,n-1-i}\\
&+\frac{u^2}{2}\frac{(2n+1)!!}{2^{n+2}}F^o_nF^o_0+\frac{u^2}{2}\sum_0^{n-1}a_{i,n-1-i}F_i^o\left(\frac{u^2}{2}F^o_0\d_x-\frac{u^2}{4}\d_x^2\right)F^c_{n-1-i}.
\end{align*}
Computing (C) in a similar way we get
\begin{gather*}
\text{(C)}=\underbrace{u^2\frac{(2n+1)!!}{2^{n+2}}F^c_nF_0^o}_{*}+u^2\sum_0^{n-1}a_{i,n-1-i}F^c_i\left(\underbrace{\frac{u^2}{2}F_0^o\d_x}_{***}\underbrace{-\frac{u^2}{4}\d_x^2}_{\bullet}\right)F^c_{n-1-i}.
\end{gather*}
It is easy to compute that
\begin{gather*}
\text{(D)}=-\frac{u^{n+1}}{4}\frac{\d_s^{n+1}\tau^o}{\tau^o}.
\end{gather*}
We have marked the terms that cancel each other in the total sum (A)+(B)+(C)+(D). Collecting the remaining terms we get~\eqref{eq: recursion}.
\end{proof}

From the commutation relation~\eqref{eq: open commutation} it follows that $\cL_n=\frac{(-1)^{n-2}}{(n-2)!}ad_{\cL_1}^{n-2}\cL_2$, for $n\ge 3$. Thus, it is sufficient to prove the open Virasoro equations~\eqref{eq: open virasoro} only for $n=-1,0,1,2$. 

By Theorem~\ref{theorem: burgers} and Proposition~\ref{proposition: string solution}, the series $F^o$ satisfies the open string equation~\eqref{eq: open string}. Thus, $\cL_{-1}\tau=0$. By Lemma~\ref{lemma: main lemma}, $\cL_0\tau=\tau (uF^o_s+u\d_s)\frac{\cL_{-1}\tau}{\tau}=0$.

Substituting $n=0$ in \eqref{eq: recursion} we get
\begin{gather*}
\frac{\cL_1\tau}{\tau}-(uF^o_s+u\d_s)\frac{\cL_0\tau}{\tau}=\frac{u}{4}\left[u\left(F^c_{0,0}+\frac{1}{2}(F^o_0)^2+\frac{1}{2}F^o_{0,0}\right)-F^o_s\right]\stackrel{\text{by Theorem~\ref{theorem: existence}}}{=}0.
\end{gather*}
Therefore, $\cL_1\tau=0$.

Finally, let us write equation~\eqref{eq: recursion} for $n=1$. We get
\begin{align}
\label{eq: open1}
\frac{\cL_2\tau}{\tau}-(uF^o_s+u\d_s)\frac{\cL_1\tau}{\tau}=&\frac{3u^2}{8}F^c_{0,1}+\frac{u^4}{16}F_{0,0}^cF^c_{0,0}+\frac{3u^2}{16}F^o_1F^o_0+\frac{u^2}{8}F_0^o\left(\frac{u^2}{2}F^o_0\d_x-\frac{u^2}{4}\d_x^2\right)F^c_0\\
&+\frac{3u^2}{16}F^o_{0,1}+\frac{u^4}{16}F^o_{0,0}F^c_{0,0}-\frac{u^2}{4}\left((F^o_s)^2+F^o_{s,s}\right).\notag
\end{align}
Denote $\frac{\d^2 F^c}{\d t_0^2}$ by $w$ and $F^o$ by $v$. We have $F^c_{0,1}=K_1=\frac{w^2}{2}+\frac{1}{12}w_{xx}$. By the open KdV equations~\eqref{eq: open kdv}, we have
$$
F^o_1=u^2\left(\frac{v_x^3}{3}+v_xv_{xx}+\frac{v_{xxx}}{3}+v_xw+\frac{w_x}{2}\right).
$$
By Theorem~\ref{theorem: existence}, $F^o_s=u\left(\frac{v_x^2+v_{xx}}{2}+w\right)$. Substituting these expressions in the right-hand side of~\eqref{eq: open1}, after somewhat lengthy computations, we get zero. Hence, $\cL_2\tau=0$. The theorem is proved.


{\appendix

\section{Virasoro equations for the moduli of closed Riemann surfaces}\label{section: closed virasoro}

In this section we revisit the proof of the equivalence of the usual KdV and the Virasoro equations for the moduli space of stable curves. There is a reason to do it. In all papers, that we found, the proof is presented in a way more suitable for physicists. So we decided to rewrite it in a more mathematical style and also to make it more elementary. We follow the idea from~\cite{DVV91}.

Let $F$ be a power series in the variables $t_0,t_1,t_2,\ldots$ with the coefficients from $\mbC[u,u^{-1}]$. Suppose~$F$ satisfies the string equation~\eqref{eq: string equation}, the condition~$F|_{t_*=0}=0$ and the second derivative~$\frac{\d^2 F}{\d t_0^2}$ is a solution of the KdV hierarchy~\eqref{eq: kdv hierarchy}. Let us prove that $F$ satisfies the Virasoro equations~\eqref{eq: Virasoro}.

Let $D:=F_{0,0}\d_x+\frac{1}{2}F_{0,0,0}+\frac{1}{8}\d_x^3$ and $w=\frac{\d^2 F}{\d t_0^2}$. We can rewrite the KdV equations~\eqref{eq:tau function} in the following way:
\begin{gather}\label{eq: kdv}
u^2 D F_{0,n-1}=\frac{2n+1}{2}\d_xF_{0,n}.
\end{gather}
Let $\tau:=\exp\left(F\right)$.

\begin{lemma}\label{lemma: virasoro lemma}
For any $n\ge -1$, we have 
$$
u^2D\d_x\frac{L_n\tau}{\tau}=\d_x^2\frac{L_{n+1}\tau}{\tau}.
$$
\end{lemma}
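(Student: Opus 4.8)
The plan is to prove the identity $u^2 D \d_x \frac{L_n\tau}{\tau} = \d_x^2 \frac{L_{n+1}\tau}{\tau}$ by a direct computation that systematically exploits the KdV equations in the form~\eqref{eq: kdv}, namely $u^2 D F_{0,n-1} = \frac{2n+1}{2}\d_x F_{0,n}$. First I would compute $\frac{L_n\tau}{\tau}$ explicitly as a differential expression in $F$ and its $t_j$-derivatives. Since $L_n = L_n^{(1)} + L_n^{(2)} + (\text{constant/quadratic terms})$, with the first-order part $\sum_i \frac{(2i+2n+1)!!}{2^{n+1}(2i-1)!!}(t_i-\delta_{i,1})\d_{t_{i+n}}$ and the second-order part $\frac{u^2}{2}\sum_{i=0}^{n-1} a_{i,n-1-i}\d_{t_i}\d_{t_{n-1-i}}$, applying $\frac{1}{\tau}L_n\tau$ turns the first-order part into a linear combination of $F_{i+n}$ and the second-order part into $\frac{u^2}{2}\sum a_{i,n-1-i}(F_{i,n-1-i}+F_i F_{n-1-i})$. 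The presence of the shift $t_i-\delta_{i,1}$ and the string-type term $\sum_i t_i\d_{t_{i+n}}$ means that both sides carry an explicit $t_*$-dependence, so I would separate the contributions into a ``differential-polynomial'' part (expressible purely through $w=F_{0,0}$ and its $x$-derivatives via the KdV recursion) and a ``dilaton/string'' part carrying the linear $t_i$ weights.

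The key simplification is that applying $\d_x = \d_{t_0}$ to $\frac{L_n\tau}{\tau}$ collapses the explicit $t_i$-dependence: differentiating $\sum_i \frac{(2i+2n+1)!!}{2^{n+1}(2i-1)!!}t_i F_{i+n}$ in $t_0$ produces the single boundary term $\frac{(2n+1)!!}{2^{n+1}}F_n$ plus $\sum_i (\cdots)t_i F_{0,i+n}$, and the latter reassembles into $\frac{L_n\tau}{\tau}$-type objects upon a second differentiation. So after taking $\d_x$ (for the left side) or $\d_x^2$ (for the right side), the identity reduces to a relation among the genuinely differential-polynomial quantities $F_{0,m}$, $F_{0,0,m}$, to which~\eqref{eq: kdv} applies directly. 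Concretely, I expect that $\d_x\frac{L_n\tau}{\tau}$ can be written so that applying $u^2D$ and then comparing index by index against $\d_x^2\frac{L_{n+1}\tau}{\tau}$ reduces, after using~\eqref{eq: kdv} to rewrite every $u^2 D F_{0,m-1}$ as $\frac{2m+1}{2}\d_x F_{0,m}$, to a telescoping of the double-factorial coefficients $\frac{(2i+2n+1)!!}{(2i-1)!!}$ and $a_{i,n-1-i}$ that matches the shift $n\mapsto n+1$.

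The main obstacle will be the bookkeeping of the quadratic terms coming from $L_n^{(2)}$ together with the cross-terms $F_0 F_{n-1-i}$ produced by $\frac{1}{\tau}L_n\tau$, and reconciling them with the nonlinear structure of $D = F_{0,0}\d_x + \frac{1}{2}F_{0,0,0} + \frac{1}{8}\d_x^3$. The operator $u^2 D$ is nonlinear in $F$ through its $F_{0,0}$ and $F_{0,0,0}$ coefficients, so when it hits the quadratic part $\frac{u^2}{2}\sum a_{i,n-1-i}(F_{i,n-1-i}+F_iF_{n-1-i})$ one generates triple products of derivatives of $F$ that must cancel against analogous triple products on the right. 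I would handle this by repeatedly invoking~\eqref{eq: kdv} to replace each factor $u^2 D F_{0,m}$, and by checking the coefficient identity $\frac{2m+1}{2}a_{i,m-i} = (\text{combination matching the } n+1 \text{ side})$ that makes the Leibniz-rule cross-terms telescope. The identity $[L_n,L_m]=(n-m)L_{n+m}$ is not needed here; the whole proof is the honest verification that the KdV recursion intertwines $\d_x$-differentiation with the Virasoro shift $n\mapsto n+1$, and the lemma is precisely the statement that this intertwining holds at the level of $\frac{L_n\tau}{\tau}$.
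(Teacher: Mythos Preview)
Your plan is essentially the paper's proof: split $L_n$ into its first-order and second-order pieces, compute $\frac{L_n\tau}{\tau}$ and $\frac{L_{n+1}\tau}{\tau}$, and use the KdV relation $u^2 D F_{0,m-1}=\tfrac{2m+1}{2}\d_x F_{0,m}$ to cancel the $t_i$-weighted infinite sums term by term (the double-factorial shift you describe is exactly what makes the $L_n^{(1)}$-parts match). The cases $n=-1$ is done by hand, and your handling of the quadratic piece $\frac{u^2}{2}\sum a_{i,n-1-i}(F_{i,n-1-i}+F_iF_{n-1-i})$ and of the triple products produced by $D$ is also what the paper does.

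There is one point where your expectation is slightly off. After all the KdV substitutions and the telescoping of the $t_i$-dependent sums, the difference $u^2 D\d_x\frac{L_n\tau}{\tau}-\d_x^2\frac{L_{n+1}\tau}{\tau}$ does \emph{not} collapse to zero by a pure coefficient identity; one is left with a concrete residual
\[
Q=\frac{(2n+1)!!}{2^n}\Bigl(u^2 F_{0,0}F_{0,n}+\tfrac{u^2}{8}F_{0,0,0,n}-\tfrac{2n+3}{2}F_{0,n+1}\Bigr)
+u^4\sum_{i=0}^{n-1}a_{i,n-1-i}(\cdots)
-u^2\sum_{i=0}^{n}a_{i,n-i}F_{0,i}F_{0,n-i}.
\]
The paper does not attempt to telescope this away. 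Instead it observes that, since $F_{0,i}=K_i$, the quantity $Q$ is a differential polynomial in $w=F_{0,0}$ with no constant term (because $K_i|_{w_*=0}=0$), and then checks via the KdV recursion~\eqref{eq: kdv recursion} that $\d_x Q=0$; together these force $Q=0$. This structural shortcut replaces what would otherwise be a messy direct verification, and it is the one idea your outline is missing.
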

\begin{proof}
Suppose $n=-1$. We have
\begin{align*}
&\frac{L_{-1}\tau}{\tau}=-F_0+\sum_{n\ge 0}t_{n+1}F_n+\frac{t_0^2}{2u^2},\\
&\frac{L_0\tau}{\tau}=-\frac{3}{2}F_1+\sum_{n\ge 0}\frac{2n+1}{2}t_n F_n+\frac{1}{16}.
\end{align*}
From the KdV equations~\eqref{eq: kdv} it follows that
\begin{gather*}
u^2D\d_x\frac{L_{-1}\tau}{\tau}-\d_x^2\frac{L_0\tau}{\tau}=\frac{1}{2}D\d_x(t_0^2)-\frac{1}{2}\d_x^2(t_0F_0)=0.
\end{gather*}

Suppose $n\ge 0$. The operators $L_n^{(1)}$ and $L_n^{(2)}$ were defined in Section~\ref{section: open virasoro}. Using the KdV equations~\eqref{eq: kdv} we get 
\begin{multline*}
u^2 D\d_x\frac{L_n^{(1)}\tau}{\tau}-\d_x^2\frac{L_{n+1}^{(1)}\tau}{\tau}=\\
=\frac{(2n+1)!!}{2^{n+1}}\left(2u^2 F_{0,0}F_{0,n}+\frac{u^2}{2}F_{0,0,0,n}-(2n+3)F_{0,n+1}\right)+\frac{(2n+1)!!}{2^{n+2}}u^2 F_{0,0,0}F_n.
\end{multline*}
Now let us compute $u^2 D\d_x\frac{L_n^{(2)}\tau}{\tau}-\d_x^2\frac{L_{n+1}^{(2)}\tau}{\tau}$. Recall that $a_{i,j}:=\frac{(2i+1)!!(2j+1)!!}{2^{i+j+2}}$. We have 
$$
\frac{L_n^{(2)}\tau}{\tau}=\frac{u^2}{2}\sum_{i=0}^{n-1}a_{i,n-i-1}(F_{i,n-1-i}+F_iF_{n-1-i}).
$$
Using the KdV equations~\eqref{eq: kdv} it is easy to compute that
\begin{multline*}
\frac{u^2}{2}\left[u^2\sum_{i=0}^{n-1}a_{i,n-i-1}D\d_x F_{i,n-1-i}-\sum_{i=0}^na_{i,n-i}\d_x^2 F_{i,n-i}\right]=\\
=-\frac{u^4}{2}\sum_{i=0}^{n-1}a_{i,n-i-1}(F_{0,0,i}F_{0,0,n-1-i}+\frac{1}{2}F_{0,0,0,i}F_{0,n-1-i})-\frac{u^2}{2}\frac{(2n+1)!!}{2^{n+2}}F_{0,0,0,n}.
\end{multline*}
By the KdV equations~\eqref{eq: kdv}, we have
\begin{align*}
&\frac{u^2}{2}\left[u^2D\d_x\sum_{i=0}^{n-1}a_{i,n-1-i}F_iF_{n-1-i}-\d_x^2\sum_{i=0}^n a_{i,n-i}F_iF_{n-i}\right]=\\
&\phantom{aaaaaaaaaaaaa}=\frac{u^4}{2}\sum_{i=0}^{n-1}a_{i,n-i-1}(2F_{0,0}F_{0,i}F_{0,n-1-i}+F_{0,i}F_{0,0,0,n-1-i}+\frac{3}{4}F_{0,0,i}F_{0,0,n-1-i})\\
&\phantom{aaaaaaaaaaaaa=}-u^2\sum_{i=0}^n a_{i,n-i}F_{0,i}F_{0,n-i}-u^2\frac{(2n+1)!!}{2^{n+2}}F_{0,0,0}F_n.
\end{align*}

Collecting all the terms we get
\begin{align*}
u^2 D\d_x\frac{L_n\tau}{\tau}-\d_x^2\frac{L_{n+1}\tau}{\tau}=&\frac{(2n+1)!!}{2^{n+1}}\left(2u^2 F_{0,0}F_{0,n}+\frac{u^2}{2}F_{0,0,0,n}-(2n+3)F_{0,n+1}\right)\\
&-\frac{u^4}{2}\sum_{i=0}^{n-1}a_{i,n-i-1}(F_{0,0,i}F_{0,0,n-1-i}+\frac{1}{2}F_{0,0,0,i}F_{0,n-1-i})-\frac{u^2}{2}\frac{(2n+1)!!}{2^{n+2}}F_{0,0,0,n}\\
&+\frac{u^4}{2}\sum_{i=0}^{n-1}a_{i,n-i-1}(2F_{0,0}F_{0,i}F_{0,n-1-i}+F_{0,i}F_{0,0,0,n-1-i}+\frac{3}{4}F_{0,0,i}F_{0,0,n-1-i})\\
&-u^2\sum_{i=0}^n a_{i,n-i}F_{0,i}F_{0,n-i}=\\
=&\frac{(2n+1)!!}{2^n}\left(u^2 F_{0,0}F_{0,n}+\frac{u^2}{8}F_{0,0,0,n}-\frac{2n+3}{2}F_{0,n+1}\right)\\
&+u^4\sum_{i=0}^{n-1}a_{i,n-i-1}\left(-\frac{1}{8}F_{0,0,i}F_{0,0,n-1-i}+F_{0,0}F_{0,i}F_{0,n-1-i}+\frac{1}{4}F_{0,i}F_{0,0,0,n-1-i}\right)\\
&-u^2\sum_{i=0}^na_{i,n-i}F_{0,i}F_{0,n-i}=:Q.
\end{align*}
Since $K_i=\frac{\d^2 F}{\d t_0\d t_i}$, the series $Q$ can be expressed as a differential polynomial in $w,w_x,w_{xx},\ldots$. From the condition $\left.K_i\right|_{w_*=0}=0$ it follows that this polynomial doesn't have a constant term. Using~\eqref{eq: kdv recursion} it is easy to compute that $\d_xQ=0$. Thus, $Q=0$. The lemma is proved.
\end{proof}

From the commutation relation~\eqref{eq: commutation} it follows that $L_n=\frac{(-1)^{n-2}}{(n-2)!}ad_{L_1}^{n-2}L_2$, for $n\ge 3$. Thus, it is sufficient to prove the Virasoro equations~\eqref{eq: virasoro} only for $n=-1,0,1,2$. Let us do it by induction on $n$. 

The case $n=-1$ follows from the string equation. Suppose $n\ge 0$. Recall that $S:=\frac{\d}{\d t_0}-\sum_{i\ge 0}t_{i+1}\frac{\d}{\d t_i}$. Using the induction hypothesis we get
\begin{gather*}
S\frac{L_n\tau}{\tau}=\frac{S(L_n\tau)}{\tau}-\frac{(S\tau)L_n\tau}{\tau^2}=-\frac{L_{-1}(L_n\tau)}{\tau}=(n+1)\frac{L_{n-1}\tau}{\tau}-\frac{L_n(L_{-1}\tau)}{\tau}=0.
\end{gather*}
Therefore, $S\d_x\frac{L_n\tau}{\tau}=0$. By Lemma~\ref{lemma: virasoro lemma} and the induction assumption, we have $\d_x^2\frac{L_n\tau}{\tau}=0$, hence, $\left(\sum_{i\ge 0}t_{i+1}\frac{\d}{\d t_i}\right)\d_x\frac{L_n\tau}{\tau}=0$. Therefore, $\d_x\frac{L_n\tau}{\tau}\in\mbC[u,u^{-1}]$. Since $S\frac{L_n\tau}{\tau}=0$, we have $\left(\sum_{i\ge 0}t_{i+1}\frac{\d}{\d t_i}\right)\frac{L_n\tau}{\tau}\in\mbC[u,u^{-1}]$. From this we conclude that $\frac{L_n\tau}{\tau}\in\mbC[u,u^{-1}]$. Let $\<\tau_{a_1}\ldots\tau_{a_k}\>:=\left.\frac{\d^k F}{\d t_{a_1}\ldots\d t_{a_k}}\right|_{t_*=0}$. We have
\begin{align*}
&\left.\frac{L_0 \tau}{\tau}\right|_{t_*=0}=-\frac{3}{2}\<\tau_1\>+\frac{1}{16},\\
&\left.\frac{L_1 \tau}{\tau}\right|_{t_*=0}=-\frac{15}{4}\<\tau_2\>+\frac{u^2}{8}(\<\tau_0^2\>+\<\tau_0\>^2),\\
&\left.\frac{L_2 \tau}{\tau}\right|_{t_*=0}=-\frac{105}{8}\<\tau_3\>+\frac{3u^2}{8}(\<\tau_0\tau_1\>+\<\tau_0\>\<\tau_1\>).
\end{align*}
It is easy to check that all these expressions are equal to zero. The Virasoro equations are proved.
}

\end{document}